\tikzset{vert/.style={draw, fill=black, circle, inner sep=1pt}}
\tikzset{ar/.style={decoration={markings,mark=at position \arpos with {\arrow{Straight Barb[length=\arlen]}}}, postaction=decorate}}
\newtheorem{theorem}{Theorem}[section]
\newtheorem{lemma}[theorem]{Lemma}
\newtheorem{proposition}[theorem]{Proposition}
\newtheorem{corollary}[theorem]{Corollary}
\newtheorem{question}[theorem]{Question}
\newtheorem{problem}[theorem]{Problem}
\newtheorem{claim}[theorem]{Claim}
\theoremstyle{definition}
\crefname{equation}{equation}{equations}
\crefname{lemma}{Lemma}{Lemmas}
\crefname{proposition}{Proposition}{Propositions}
\crefname{claim}{Claim}{Claims}
\crefname{theorem}{Theorem}{Theorems}
\crefname{conjecture}{Conjecture}{Conjectures}
\crefname{figure}{Figure}{Figures}
\newlist{lemenum}{enumerate}{1}
\setlist[lemenum]{label=(\alph*), ref=\thelemma(\alph*)}
\newcommand\ol[1]{\overline{#1}}
\newcommand\wt[1]{\widetilde{#1}}
\newcommand\ab[1]{\lvert#1\rvert}
\newcommand{\flo}[1]{\lfloor #1 \rfloor}
\newcommand{\R}{\mathbb{R}}
\newcommand{\N}{\mathbb{N}}
\let\leq\leqslant
\let\geq\geqslant
\title{Color-avoiding directed paths in tournaments}
\author{Jacob Fox}
\thanks{JF: Department of Mathematics, Stanford University, USA. Email: \texttt{jacobfox@stanford.edu}. Research supported by NSF awards DMS-2452737 and DMS-2154129}
\author{Benny Sudakov}
\thanks{BS: Department of Mathematics, ETH Z\"urich, Switzerland. Email: \texttt{benjamin.sudakov@math.ethz.ch}. Research supported by SNSF grant 200021-228014}
\author{Yuval Wigderson} 
\thanks{YW: Institute for Theoretical Studies, ETH Z\"urich, Switzerland. Email: \texttt{yuval.wigderson@eth-its.ethz.ch}. Research supported by Dr.\ Max R\"ossler, the Walter Haefner Foundation, and the ETH Z\"urich Foundation}
\date{}
\begin{document}

\begin{abstract}
We study the following Ramsey-theoretic question: given a $q$-coloring of the edges of a tournament, how long of a directed path can we guarantee whose edges avoid one of the colors? 
Questions of this type have applications in many areas, such as vector sequences, convex geometry, and extremal hypergraph theory, and have been extensively studied over the past 50 years.

We prove that if $\varepsilon>0$ is fixed and $q$ is sufficiently large, then every $q$-edge-colored $N$-vertex tournament contains a color-avoiding directed path of length $N^{1-\varepsilon}$. 
This answers a question of Gowers and Long, strengthens several of their results, and extends earlier work of Loh.
\end{abstract}
\maketitle

\section{Introduction}
\subsection{Background and main results}
Questions about directed paths in tournaments are as old as the study of tournaments itself; the very first result on tournaments is due to R\'edei \cite{Redei}, who proved that every tournament contains a Hamiltonian directed path, i.e.\ a directed path visiting every vertex. 
A substantial generalization of this result was obtained independently by Gallai, Hasse, Roy, and Vitaver \cite{MR233733,MR179105,MR225683,MR145509}, who proved that every edge-orientation of a graph $G$ contains a directed path of length at least $\chi(G)$; R\'edei's theorem then corresponds to the case where $G$ is a complete graph. Throughout this paper, by the \emph{length} of a path, we mean the number of vertices it comprises.

Our focus in this paper will be on Ramsey-theoretic questions in tournaments, wherein we are given a tournament whose edges have been assigned one of $q$ possible colors, and we wish to understand which sorts of substructures must appear in this coloring. The study of such questions can be traced back to work of Stearns \cite{MR109087} and Erd\H os--Moser \cite{MR168494} from the late 1950s, and they have become increasingly well-studied in the past few years, e.g.\ \cite{MR3980089,MR4819947,MR416967,MR3567530,MR4103733}.

One early result in this direction, due independently to Chv\'atal \cite{MR309785} and Gy\'arf\'as--Lehel \cite{MR335354}, again concerns directed paths; they proved that in every $q$-edge-coloring of an $N$-vertex tournament, there must be a monochromatic directed path of length at least $N^{1/q}$. This fact is a simple corollary of the Gallai--Hasse--Roy--Vitaver theorem mentioned above: viewing each color class as an $N$-vertex graph, these $q$ graphs have $K_N$ as their union, hence one of them must have chromatic number at least $N^{1/q}$, from which the result follows. By using a simple product construction (see \cref{prop:lex product} for details), one can show that this bound is tight whenever $N$ is a power of $q$, even when restricting to transitive tournaments.

Despite its simple proof, even very special cases of the Chv\'atal--Gy\'arf\'as--Lehel theorem have a number of remarkable consequences. For example, restricting our attention to the case $q=2$, and focusing only on \emph{transitive} tournaments, the result we obtain is equivalent to the famous Erd\H os--Szekeres lemma \cite{MR1556929} on monotone subsequences, stating that every sequence of $N$ real numbers has a monotone subsequence of length at least $\sqrt N$. Indeed, any such sequence yields a two-coloring of a transitive $N$-vertex tournament, coloring an edge red if its right endpoint is greater than its left (and blue otherwise); it is easy to see that monotone subsequences correspond to monochromatic directed paths in this coloring, from which one deduces the Erd\H os--Szekeres lemma. Similarly, if we increase $q$ but still restrict to transitive tournaments, we readily recover the multidimensional analogue of the Erd\H os--Szekeres lemma. Further generalizations of these statements and arguments exhibit deep connections to discrete geometry \cite{MR1556929} and high-dimensional partitions \cite{MR3228450}, among others.

In this paper, we study a close cousin of these classical questions, which turns out to be surprisingly rich and difficult. 
We move from the classical Ramsey-theoretic goal of finding a monochromatic structure, to the alternative goal of finding a \emph{color-avoiding} structure; that is, one which receives at most $q-1$ colors out of the palette of $q$ colors used in total. 
The study of such questions goes back at least to the seminal work of Erd\H os and Szemer\'edi \cite{MR325446}, and is a special case of the ``generalized Ramsey theory'' introduced by Erd\H os and Gy\'arf\'as \cite{MR1645678} and intensively studied since, e.g.\ \cite{MR4975150,MR1899117,MR4563218}.

The most basic question of the kind we study was 
introduced by Loh \cite{1505.07312}, who asked for the length of the longest color-avoiding directed path that is guaranteed in every $3$-edge-coloring of an $N$-vertex tournament. We denote this quantity by $g_{3,2}(N)$. More generally, for integers $q \geq r \geq 1$, let us define $g_{q,r}(N)$ analogously: it is the minimum, over all $q$-edge-colored $N$-vertex tournaments, of the number of vertices in the longest directed path whose edges receive at most $r$ colors. While our focus will be on the color-avoiding case, in which $r=q-1$, the general function is equally natural to study.

In introducing this question, Loh observed two simple upper and lower bounds on $g_{3,2}(N)$. First, we have the trivial lower bound $g_{3,2}(N) \geq \sqrt N$, 
obtained by simply merging two of the colors and applying the Chv\'atal--Gy\'arf\'as--Lehel theorem to find a monochromatic directed path in the modified coloring; any such path must receive at most two colors in the original coloring, and hence is color-avoiding. On the other hand, Loh noted that the same simple product construction witnessing the tightness of the Chv\'atal--Gy\'arf\'as--Lehel theorem can be used to show that $g_{3,2}(N) \leq N^{2/3}$ whenever $N$ is a perfect cube, and hence $g_{3,2}(N) \leq (1+o(1))N^{2/3}$ in general. In fact, the example witnessing this is a $3$-edge-colored \emph{transitive} tournament; the case of transitive tournaments plays an important role (and was Loh's main object of study), so it is natural to further define $f_{q,r}(N)$ identically to $g_{q,r}(N)$, except restricting our attention only to $q$-edge-colorings of the $N$-vertex transitive tournament. We thus have $g_{q,r}(N) \leq f_{q,r}(N)$ for all $q,r,N$, and the bounds discussed above show that
\begin{equation}\label{eq:32 bounds}
    \sqrt N \leq g_{3,2}(N) \leq f_{3,2}(N) \leq (1+o(1))N^{2/3}.
\end{equation}
It seems to be surprisingly difficult to meaningfully improve either of these bounds, and Loh was only able to show a very minor asymptotic improvement on the lower bound for $f_{3,2}(N)$, proving that $f_{3,2}(N) \geq \sqrt N \cdot 2^{\Omega(\log^* N)}$, where $\log^*$ is the iterated logarithm function. This is proved via a reduction to the famous triangle removal lemma of Ruzsa and Szemer\'edi \cite{MR519318}, and the precise bound stated above follows from the strongest known quantitative version of the triangle removal lemma, due to the first author \cite{MR2811609}. 

Loh's work inspired a number of further authors \cite{MR4298506,1608.04153,MR4975150,MR3684779} to study this kind of question. Notably, Gowers and Long \cite{MR4298506} managed to significantly improve Loh's lower bound for $f_{3,2}(N)$ by a polynomial amount, showing that $f_{3,2}(N) \geq N^{\frac 12 + \delta}$ for an extremely small, but absolute, constant $\delta>0$. Their proof is based on an induction argument, and the base case of the induction is established by applying the bound $f_{3,2}(N) \geq \sqrt N \cdot 2^{\Omega(\log^* N)}$ discussed above; since this improvement only beats the trivial bound for an enormous $N$, the constant $\delta$ they obtain is tiny. To the best of our knowledge, there is no known asymptotic improvement on the lower bound $g_{3,2}(N) \geq \sqrt N$. 

When it comes to upper bounds on $f_{3,2}(N)$, nothing better than $(1+o(1))N^{2/3}$ is known, and Gowers and Long \cite{MR4298506} conjectured that $f_{3,2}(N) \geq N^{2/3}$. However, rather surprisingly, better upper bounds on $g_{3,2}(N)$ are available, and have been known (in a disguised form) since the 1980s. Indeed, Hamaker and Stein \cite{MR754867} found a construction for the closely related \emph{tripod packing} problem (discussed further below) which, when converted back to our setting, yields a $3$-edge-colored $N$-vertex tournament whose longest color-avoiding directed path has length $N^{0.663}$. 
The exponent in the upper bound has been steadily improved (e.g.\ \cite{MR2326159,intelligencer,MR1311249}). Unaware of these prior developments, Gowers and Long \cite{MR4298506} developed a continuous relaxation of this problem, allowing them to obtain current record of $g_{3,2}(n) \leq N^{0.649}$; they even speculate, based on some computational evidence of Wagner, that their construction may be optimal. We stress that all of these constructions use highly non-transitive tournaments, and that the best known upper bound in the transitive case remains $f_{3,2}(N) \leq (1+o(1))N^{2/3}$, which is conjecturally optimal; thus, in contrast to the Chv\'atal--Gy\'arf\'as--Lehel theorem, it seems that the transitive case is not extremal for this color-avoiding problem. This is a genuine difference from the monochromatic case, and may serve to explain some of the difficulty of this problem.

In this paper, we study this kind of color-avoiding question in $q$-colored tournaments, where $q \geq 4$. Extending \eqref{eq:32 bounds}, the simple known bounds on this problem are
\[
    \sqrt N \leq g_{q,q-1}(N) \leq f_{q,q-1}(N) \leq (1+o(1)) N^{1-1/q},
\]
where the upper bound follows from the same product construction and the lower bound again follows by arbitrarily merging the colors into two classes and applying the Chv\'atal--Gy\'arf\'as--Lehel theorem. Note that the gap between the upper and lower bounds only grows as $q$ increases; moreover, the lower bound appears to be more and more wasteful as $q$ grows. It is thus natural to expect that the true exponent on $N$ should increase with $q$, and perhaps even converge to $1$ as $q \to \infty$; this is precisely the statement of our main theorem.

\begin{theorem}\label{thm:main}
    We have that $g_{q,q-1}(N) \geq c_q N^{1-O(1/\sqrt{\log q})}$, where $c_q >0$ is a constant depending only on $q$.
    That is, in any $q$-edge coloring of any $N$-vertex tournament, there exists a color-avoiding directed path of length at least $c_q N^{1-O(1/\sqrt{\log q})}$.
\end{theorem}

In particular, we obtain the same lower bound when restricting to transitive tournaments; however, as we discuss below, alternative techniques give even stronger lower bounds on $f_{q,q-1}(N)$. Nevertheless, rather interestingly, our proof approach demonstrates that any extremal construction for $g_{q,q-1}(N)$ for $q \geq 4$ must be approximately transitive.
To make this precise, let us recall that an $N$-vertex tournament is \emph{$\delta$-close to transitive} if it can be made transitive by reversing the orientation of at most $\delta N^2$ edges, and is \emph{$\delta$-far from transitive} otherwise.
\begin{theorem}\label{labelfarfromtransitive}
    If an $N$-vertex tournament is $\delta$-far from transitive, then any $q$-coloring of its edges contains a directed path of length $c\delta^2 N/q^3$ which is colored by at most three colors, where $c>0$ is an absolute constant. Furthermore, the coloring contains the square of a path such that the direction of the edges between consecutive vertices is forward, the direction of edges between vertices of distance two is backwards, and the coloring of the edges is periodic with period $3$. 
\end{theorem}
\begin{center}
    \begin{tikzpicture}
        \foreach \x in {1,...,15} \node[vert] (\x) at (\x,0) {};
        \foreach \x in {1,4,...,15} {
        \pgfmathtruncatemacro\y{\x+1}
        \pgfmathtruncatemacro\z{\x-2}
        \draw[ar, red] (\x) -- (\y);
        \ifthenelse{\z>0}{\draw[ar, red, arpos=.5] (\x) to[out=150, in=30] (\z);}{}
        }
        
        \foreach \x in {2,5,...,15} {
        \pgfmathtruncatemacro\y{\x+1}
        \pgfmathtruncatemacro\z{\x-2}
        \draw[ar, blue] (\x) -- (\y);
        \ifthenelse{\z>0}{\draw[ar, blue, arpos=.5] (\x) to[out=150, in=30] (\z);}{}
        }
        
        \foreach \x in {3,6,...,15} {
        \pgfmathtruncatemacro\y{\x+1}
        \pgfmathtruncatemacro\z{\x-2}
        \ifthenelse{\y<16}{\draw[ar, green!50!black] (\x) -- (\y);}{}
        \ifthenelse{\z>0}{\draw[ar, green!50!black, arpos=.5] (\x) to[out=150, in=30] (\z);}{}
        }
    \end{tikzpicture}
\end{center}

In particular, if $q \geq 4$ is fixed and $\delta = \Omega(1)$, then we find a color-avoiding directed path of length $\Omega(N)$, much stronger than the result in \cref{thm:main}. We will use this in our proof of \cref{thm:main}, as it allows us to focus on the nearly transitive case. We stress that there is a genuine difference between the $q=3$ and $q\geq 4$ cases: the construction of Gowers--Long is $\Omega(1)$-far from transitive, yet does not have color-avoiding directed paths of linear length. Moreover, their construction has much shorter color-avoiding paths than the best known, and conjecturally extremal, transitive construction. We also stress that, in general, we do not expect transitive tournaments to be the ones minimizing the length of color-avoiding paths.

The proof of \cref{labelfarfromtransitive} is extremely short and simple, and relies on a result of the first two authors \cite{FoSu} on the structure of tournaments that are far from transitive. They proved a directed version of the triangle removal lemma \cite{MR519318} with very good quantitative bounds, and it is this good quantitative behavior that means that the $\delta$-dependence in \cref{labelfarfromtransitive} is polynomial.

\subsection{Applications: vectors sequences and pod packings}
All prior works studying color-avoiding directed paths have used a close connection to certain geometric questions, whose origin can be traced to Seidenberg's classical proof \cite{MR106189} of the Erd\H os--Szekeres lemma. These geometric questions themselves have a long and storied history, and
have been extensively studied for over 50 years, with different researchers arriving at them from a number of different perspectives,
including extremal problems in convex geometry \cite{MR2065250} and hypergraphs \cite{MR4298506}, $k$-majority tournaments \cite{1505.07312}, incidence geometry \cite{1608.04153}, the Erd\H os--Hajnal conjecture \cite{MR3684779}, and packing questions motivated by group theory \cite{MR219435} and error-correcting codes \cite{MR754867}. For a detailed discussion of these disparate instantiations, we refer to \cite[Section 4.3]{MR3904824}.

To describe this connection, we make the following definition.
Given integers $q \geq r \geq 1$, we define the relation $<_r$ on $\R^q$ by saying that $x<_r y$ if there are at least $r$ coordinates $i \in [q]$ for which $x_i<y_i$. Despite the notation, $<_r$ is not a partial order whenever $r<q$, as it is not transitive. 

In order to explain how this relates to color-avoiding directed paths, let us focus for the moment on the transitive setting: say we are given a transitive tournament with vertex set $[N]$ and all edges oriented forwards, and suppose each of its edges has been assigned a color from $[q]$. For every vertex $a \in [N]$, we define a vector $x_a \in \N^q$ as follows: the $i$th coordinate of $x_a$ records the length of the longest directed path ending at $a$ whose edges avoid the color $i$. The property enjoyed by these vectors is that they form a \emph{$(q-1)$-increasing sequence}. That is, for every $a<b$, we have that $x_a <_{q-1} x_b$. Indeed, if the color of the edge $ab$ is $i$, then for every $i \neq j \in [q]$, we may extend the longest $j$-avoiding path ending at $a$ to a strictly longer $j$-avoiding path ending at $b$, implying that $(x_b)_j > (x_a)_j$. Moreover, if $n$ is the length of the longest color-avoiding directed path in the given transitive tournament, then all the vectors $x_a$ lie in the box $[n]^q$. 

We are thus led to the natural extremal function $F_{q,q-1}(n)$, defined as the length of the longest $(q-1)$-increasing sequence of vectors in $[n]^q$. The argument presented above shows if $f_{q,q-1}(N) \leq n$, then $F_{q,q-1}(n) \geq N$, as we may convert any transitive tournament with no long color-avoiding paths into a long $(q-1)$-increasing sequence. Somewhat surprisingly, this connection can be reversed: given a $(q-1)$-increasing sequence of $N$ vectors in $[n]^q$, one can $q$-color the edges of an $N$-vertex transitive tournament such that every color-avoiding path has length at most $n$. Indeed, for $a<b$ one colors the edge $ab$ by the unique index $i$ (if it exists) such that $(x_a)_i>(x_b)_i$; if there is no such index, this edge can be colored arbitrarily. In this coloring, any directed path avoiding the color $i$ must have strictly increasing $i$th coordinates in the corresponding vectors, implying that its length is at most $n$ since the vectors are contained in $[n]^q$. Combining these two constructions, we see that the extremal functions $F_{q,q-1}$ and $f_{q,q-1}$ are inverses of one another, in the sense that
\begin{equation}\label{eq:Ff equivalence}
    f_{q,q-1}(N) \leq n\qquad \text{if and only if}\qquad F_{q,q-1}(n)\geq N.
\end{equation}
For a formal proof of this fact, see e.g.\ \cite[Lemma 2.1]{1505.07312}.

Of course, our study in this paper is not restricted to transitive tournaments, so it is natural to generalize the notion of $(q-1)$-increasing sequences as follows. We say that a set $\{x_1,\dots,x_N\} \subseteq [n]^q$ is \emph{$(q-1)$-comparable} if, for all $a<b$, we have $x_a<_{q-1} x_b$ or $x_b <_{q-1} x_a$, and we let $G_{q,q-1}(N)$ denote the maximum size of a $(q-1)$-comparable set of vectors in $[n]^q$. Even more generally, for $q \geq r \geq 1$, we define $F_{q,r}(n)$ and $G_{q,r}(n)$ analogously, as the maximum length (resp.\ maximum size) of an $r$-increasing sequence (resp.\ $r$-comparable set) in $[n]^q$.
Given an $r$-comparable set, we naturally obtain an $N$-vertex tournament\footnote{Strictly speaking, one only obtains a well-defined tournament if $r>q/2$, as this condition guarantees that if $x <_r y$, then $y \not<_r x$. If $r \leq q/2$, then we can, for example, allow antiparallel edges in case $x<_r y$ and $y<_r x$.}, recording for each pair $(a,b)$ whether $x_a <_r x_b$ or $x_b <_r x_a$. Thus, the $r$-comparable set is in fact an $r$-increasing sequence if and only if this tournament is transitive. Additionally, the fact that the $r$-comparable condition is weaker immediately implies that $F_{q,r}(n) \leq G_{q,r}(n)$ for all $q,r,n$.

Before proceeding, we make two warnings about the difficulty of extending \eqref{eq:Ff equivalence}. First, it is very natural to assume that this equivalence carries over verbatim to an equivalence between $F_{q,r}$ and $f_{q,r}$. However, this is not the case. 
Instead, the same argument above shows that $F_{q,r}(n)$ is equal to the largest $N$ for which the following holds. One can assign to every edge of an $N$-vertex transitive tournament a subset of $r$ colors from a palette of $q$ colors, and no color appears in the color set of every edge of a directed path $P$ of length $n+1$. This is one less than the \textit{set-coloring ordered Ramsey number} of a monotone path with $n+1$ vertices. Set-coloring Ramsey numbers have been extensively studied recently for cliques (see \cite{MR4752601,MR4751635,MR4704267}) and have found interesting connections to coding theory, and have also been studied for certain other classes of graphs (see e.g.\ \cite{MR4563218} and the references therein). Ordered Ramsey numbers of graphs have also been extensively studied recently (see \cite{MR3575208,MR4057168} and the recent survey \cite{2502.02155}). 

Our second warning concerns the extension of \eqref{eq:Ff equivalence} to the non-transitive setting. Again, it would be natural to expect that $G_{q,q-1}$ and $g_{q,q-1}$ are inverses of each other, but this is not the case. The thing that goes wrong is the first direction of the equivalence: it may be that the edge $a\to b$ is colored with color $i$, but we can nevertheless not extend a $j$-avoiding path ending at $a$ to one ending at $b$, as such a path may have already visited $b$. Note that we did not encounter this issue in the transitive setting, as such a situation can only arise in the presence of directed cycles. Although this issue may seem like a mere technicality, it turns out to be fundamental; it is also the main difficulty in the proof of the Gallai--Hasse--Roy--Vitaver theorem, and must also be carefully dealt with in our proof of \cref{thm:main}. Nevertheless, the other direction of the equivalence does go through in the non-transitive setting: any $(q-1)$-comparable set of vectors in $[n]^q$ yields a $q$-coloring of their corresponding tournament in which the longest directed color-avoiding path has at length at most $n$. That is, we have the one way implication
\begin{equation}\label{eq:Gg equivalence}
    \text{if }G_{q,q-1}(n) \geq N, \qquad \text{then }g_{q,q-1}(N) \leq n.
\end{equation}
Thus, for example, the constructions of Hamaker--Stein \cite{MR754867} and Gowers--Long \cite{MR4298506} actually prove lower bounds on $G_{3,2}(n)$, from which one deduces the upper bounds on $g_{3,2}(N)$ which we discussed above.

In fact, the work of Hamaker--Stein, as well as many  subsequent papers on this topic, study yet another geometric question which we now introduce. 
A {\it tripod} of order $n$ consists of a corner and the three adjacent edges of an integer $n \times n \times n$ cube. The corner of the tripod is called its \textit{apex}. We say that a collection of tripods is \textit{aligned} if they are translates of one other, and that they \emph{pack} if they are pairwise disjoint. The maximum number of aligned tripods which pack and whose apex is in $[n]^3$ is precisely $G_{3,2}(n)$, as two tripods with apices in $[n]^3$ are disjoint if and only if their apices are $2$-comparable. 
Most of the early paper studying $G_{3,2}(n)$ were motivated
from the question of packing tripods, 
and it is not hard to show that the two questions are equivalent: bounding the maximum number of disjoint tripods of order $n$ whose apex lies in $[n]^3$ is equivalent to bounding the maximum density of $3$-space that can be packed by order-$n$ tripods. For a formal statement (and in particular for the formal definition of the density of a packing), we refer to \cite[Chapter 4]{MR1311249} or \cite{MR754866}. 

More generally, a {\it $(q,r)$-pod of order $n$} consists of a corner and the incident $(r-1)$-dimensional faces of an integer $q$-dimensional cube with side length $n$. We may just refer to it as a $(q,r)$-pod if the order $n$ is implicit. As above, the corner of the $(q,r)$-pod is called its \textit{apex}, and a collection of $(q,r)$-pods is \textit{aligned} if they are translates of each other, and the collection \textit{packs} if they are pairwise disjoint. The maximum number of aligned $(q,r)$-pods that pack whose apex is in $[n]^q$ is precisely $G_{q,r}(n)$ as two $(q,r)$-pods with apices in $[n]^q$ are disjoint if and only if their apices are $r$-comparable. Indeed, if $(x_1,\ldots,x_q),(y_1,\ldots,y_q) \in [n]^q$ are not $r$-comparable, then $(z_1,\ldots,z_q) \in [n]^q$ given by $z_i=\max(x_i,y_i)$ for each coordinate $i$ is in both the $(q,r)$-pod with apex $(x_1,\ldots,x_q)$ and the $(q,r)$-pod with apex $(y_1,\ldots,y_q)$. Conversely, if $(x_1,\ldots,x_q),(y_1,\ldots,y_q)$ are $r$-comparable, then  without loss of generality, $y_i>x_i$ for $i=1,\ldots,r$, in which case every element of the $(q,r)$-pod with apex $(y_1,\ldots,y_q)$ has value at least $y_i$ in coordinate $i$ for each $i \in [r]$, but no element of the $(q,r)$-pod with apex $(x_1,\ldots,x_q)$ has this property, so these $(q,r)$-pods are disjoint. 

\begin{figure}[ht]
    \newcommand{\cube}[3]{%
  \begin{scope}[shift={(#1,#2,#3)}]
    \filldraw[fill=white!90!gray,draw=black]
      (0,0,1) -- (1,0,1) -- (1,1,1) -- (0,1,1) -- cycle;
    \filldraw[fill=white!70!gray,draw=black]
      (0,0,0) -- (0,1,0) -- (0,1,1) -- (0,0,1) -- cycle;
    \filldraw[fill=white!50!gray,draw=black]
      (0,0,0) -- (1,0,0) -- (1,0,1) -- (0,0,1) -- cycle;
  \end{scope}%
}
\def\tripod{  0/0/0, 0/0/1, 0/0/2, -1/0/0, -2/0/0, 0/-1/0, 0/-2/0}
\begin{subfigure}[t]{.23\textwidth}
    \centering
    \begin{tikzpicture}[
  scale=.55,
  x={(0.9cm,0.4cm)},
  y={(-0.9cm,0.4cm)},
  z={(0cm,1cm)},
  line join=round,
  line cap=round
]
\foreach \x/\y/\z in {3/0/0, 2/0/0, 1/0/0,  0/0/0, 0/0/1, 0/0/2, 0/0/3, 0/-1/0, 0/-2/0, 0/-3/0} {\cube{\x}{\y}{\z}}
\end{tikzpicture}
\caption{A tripod of order 4}
\end{subfigure}
\begin{subfigure}[t]{.72\textwidth}
    \centering

\begin{tikzpicture}[
  scale=.55,
  x={(0.9cm,0.4cm)},
  y={(-0.9cm,0.4cm)},
  z={(0cm,1cm)},
  line join=round,
  line cap=round
]
\foreach \sh in {1,2} {
\begin{scope}[xshift=190*\sh]
    \foreach \x/\y/\z in \tripod {\cube{\x}{\y}{\z}}
    \scoped[x={(1cm,0)}, y={(0,1cm)}] \draw (-3,-2) rectangle (3,4);
    \scoped[x={(1cm,0)}, y={(0,1cm)}] \node at (0,4.5) {last coordinate = \sh};
\end{scope}
\foreach \x/\y/\z in {0/0/0, 0/0/1, 0/0/2, -1/0/0, -1/0/1, -1/0/2, -2/0/0, -2/0/1, -2/0/2, 0/-1/0, 0/-1/1, 0/-1/2, 0/-2/0, 0/-2/1, 0/-2/2, -1/-1/0, -1/-2/0, -2/-1/0, -2/-2/0} {\cube{\x}{\y}{\z}}
}
    \scoped[x={(1cm,0)}, y={(0,1cm)}] \draw (-3,-2) rectangle (3,4);
    \scoped[x={(1cm,0)}, y={(0,1cm)}] \node at (0,4.5) {last coordinate = 0};
\end{tikzpicture}
\caption{A $(4,3)$-pod of order $3$; each square represents a $3$-dimensional slice of $\mathbb R^4$}
\end{subfigure}
\end{figure}

Gowers and Long \cite{MR4298506} proved a large number of results on the extremal functions $F_{q,r}(n)$ and $G_{q,r}(n)$. For example, we already mentioned their improved bound $f_{3,2}(N) \geq N^{\frac 12 + \delta}$; they actually proved it in the equivalent form $F_{3,2}(n) \leq n^{2-\delta}$. It is easy to deduce from this that $F_{q,q-1}(n) \leq n^{2-\delta}$ for all $n$, again obtaining a power savings over the trivial bound of $n^2$. However, for the more general question of $G_{q,r}(n)$, their results were much more limited: the trivial upper bound here\footnote{Indeed, by deleting the least $r-1$ coordinates from every vector in an $r$-comparable set, we obtain a $1$-comparable set in $[n]^{q-r+1}$, whose size is trivially at most $n^{q-r+1}$.} is $G_{q,r}(n) \leq n^{q-r+1}$, and they noted that an application of the hypergraph removal lemma, generalizing the use of the triangle removal lemma above\footnote{In the setting of vectors, the fact that the removal lemma is applicable was first noted by Tiskin \cite{MR2326159}, several years before Loh \cite{1505.07312} used the same idea in the study of color-avoiding paths.}, shows that $G_{q,r}(n) = o(n^{q-r+1})$. However,
the savings is extraordinarily small (the savings is roughly the reciprocal of the $(q-r+1)$th level of the Ackermann hierarchy, evaluated at $n$). As such, Gowers and Long highlighted the following question, saying it is the ``most annoying'' question they were unable to answer. 
\begin{question}[{\cite[Question 6.3]{MR4298506}}]\label{qu:annoying}
Is there any pair $(q,r)$ for which one can prove a power savings $G_{q,r}(n) \leq n^{q-r+1-\delta}$, for some absolute constant $\delta>0$?
\end{question}
Thanks to \eqref{eq:Gg equivalence}, our lower bound on $g_{q,q-1}(N)$ in \cref{thm:main} yields a corresponding upper bound on $G_{q,q-1}(n)$, and thus also on the packing density of $(q,q-1)$-pods. As such, we are able to resolve \cref{qu:annoying}, as stated in the following corollary.
\begin{corollary}\label{cor:applications}
    We have
    \[
        G_{q,q-1}(n) \leq C_q n^{1+O(1/\sqrt{\log q})},
    \]
    where $C_q>0$ is an absolute constant. 

    Consequently, for $n$ sufficiently large as a function of $q$, the density of $q$-space that can be packed by $(q,q-1)$-pods of order $n$ is at most $n^{O(1/\sqrt{\log q})-1}$. 
\end{corollary}
In the second statement, we used the fact that there are only $2^q$ corners of the $q$-cube, so by losing a factor $2^q$, we can assume that all the $(q,q-1)$-pods we pack are aligned.

However, if one is only interested in these geometric corollaries, there is an alternative approach that yields much stronger bounds. The first step of the approach is the following surprisingly simple observation, which shows that when $r>\frac{2q}3$, there is no difference between the $r$-increasing and $r$-comparable questions.

\begin{proposition}\label{prop:F=G}
    Let $q\geq r$ be integers with $r>\frac{2q}3$. Then $F_{q,r}(n)=G_{q,r}(n)$ for all $n$. 
\end{proposition}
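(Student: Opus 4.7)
The plan is to prove $G_{q,r}(n) \leq F_{q,r}(n)$, since the reverse inequality is noted in the excerpt. The strategy is to argue that when $r > 2q/3$, the tournament structure induced on an $r$-comparable set by $<_r$ must be transitive, so one can reorder the vectors into an $r$-increasing sequence of the same length.

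First, I would observe that the hypothesis $r > 2q/3 > q/2$ makes $<_r$ asymmetric: if $x <_r y$, then more than $q/2$ coordinates satisfy $x_i < y_i$, leaving fewer than $r$ coordinates available for the reverse strict inequality, so $y \not<_r x$. Thus an $r$-comparable set $\{x_1,\dots,x_N\} \subseteq [n]^q$ gives rise to a well-defined tournament $T$, with $a \to b$ whenever $x_a <_r x_b$. Since a tournament is transitive if and only if it contains no directed $3$-cycle, the task reduces to ruling out triples $x <_r y <_r z <_r x$ in the set.

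Suppose such a cycle exists, and set $A = \{i : x_i < y_i\}$, $B = \{i : y_i < z_i\}$, $C = \{i : z_i < x_i\}$, each of size at least $r$. Any $i \in A \cap B \cap C$ would force $x_i < y_i < z_i < x_i$, so $A \cap B \cap C = \emptyset$. Consequently the three pairwise intersections $A \cap B$, $A \cap C$, $B \cap C$ are themselves pairwise disjoint subsets of $[q]$, and inclusion--exclusion yields
\[
    |A| + |B| + |C| = |A \cup B \cup C| + \bigl(|A \cap B| + |A \cap C| + |B \cap C|\bigr) \leq q + q = 2q,
\]
contradicting $|A| + |B| + |C| \geq 3r > 2q$. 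So no such cycle exists, $T$ is transitive, and relabeling produces an $r$-increasing sequence of length $N$.

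I do not anticipate a real obstacle, as the whole argument is essentially one application of inclusion--exclusion. The only subtle point is where the threshold $r > 2q/3$ enters: a naive direct approach via $|A \cap B| \geq 2r - q$ would try to exhibit $r$ coordinates on which $x_i < z_i$, but this requires $2r - q \geq r$, i.e.\ $r \geq q$, which is useless. Passing through the tournament framework and ruling out $3$-cycles rather than directly verifying transitivity is what allows the bound $r > 2q/3$ to suffice; moreover, the matching between this threshold and the inclusion--exclusion bound suggests the hypothesis is essentially optimal for this style of argument.
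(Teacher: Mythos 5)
Your proof is correct and follows essentially the same route as the paper: build the tournament on the $r$-comparable set, reduce transitivity to ruling out a cyclic triangle, and observe that the three index sets $A,B,C$ of size $\geq r > 2q/3$ inside $[q]$ must share a coordinate, forcing the impossible chain $x_i < y_i < z_i < x_i$. The paper gets the common element directly from $\ab A + \ab B + \ab C - 2q > 0$ rather than via your inclusion--exclusion on the pairwise intersections, but this is a cosmetic reorganization of the same counting argument.
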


\cref{prop:F=G} is quite surprising for a few reasons. First, it is false for $(q,r)=(3,2)$: the old construction of Hamaker and Stein shows $G_{3,2}(7)\geq 19$, whereas one can check by computer that $F_{3,2}(7)= 17$. More generally, Gowers and Long proved that $G_{3,2}(n) \geq n^{1.54}$, and they conjectured that $F_{3,2}(n) \leq n^{3/2}$. These facts also suggest that the requirement $r>\frac{2q}3$ in \cref{prop:F=G} is best possible. 

The other reason why \cref{prop:F=G} is surprising to us is that we do not expect it to be true for tournaments, i.e.\ we do not expect $f_{q,q-1}(N)=g_{q,q-1}(N)$; although \cref{labelfarfromtransitive} shows that the extremal examples for $g_{q,q-1}$ must be close to transitive, we do not expect them to be exactly transitive.
Despite all of this, the proof of \cref{prop:F=G} is extremely simple.
\begin{proof}[Proof of \cref{prop:F=G}]
    Let $S \subseteq [n]^q$ be an $r$-comparable set with $\ab S = G_{q,r}(n)$. If the tournament defined by $S$ is transitive, then $\ab S \leq F_{q,r}(n)$, and we are done. If not, then this tournament has a cyclic triangle, that is, three vectors $x,y,z \in [n]^q$ such that $x <_r y <_r z <_r x$. Let
    \[
        A = \{i \in [q]: x_i < y_i \} ,\qquad B = \{i \in [q]: y_i < z_i \} ,\qquad C = \{i \in [q]: z_i < x_i \} .
    \]
    Then $\ab A, \ab B, \ab C \geq r > \frac{2q}{3}$, hence $A \cap B \cap C \neq \varnothing$. But if $i \in A \cap B \cap C$, we have $x_i < y_i < z_i < x_i$, which is a contradiction.
\end{proof}
\cref{prop:F=G} is already enough to answer \cref{qu:annoying}, when combined with the result $F_{q,q-1}(n) \leq n^{2-\delta}$ of Gowers and Long. However, as pointed out to us by Cosmin Pohoata (personal communication), much stronger results were obtained by Sawin and Tao \cite{206224}, who proved the following theorem.
\begin{proposition}[Sawin--Tao]\label{prop:sawin-tao}
    Let $\gamma \in (\frac 12, 1)$, and let $(X_1,\dots,X_N)$ be a sequence of (dependent) random variables valued in $[n]$ with the property that $\mathbb P(X_b>X_a) \geq \gamma$ for all $1 \leq a<b\leq N$. Then
    \[
    N \leq n^{1/(2\gamma-1)}.
    \]
\end{proposition}
As noted by Pohoata, this immediately implies that 
for all $q \geq r \geq 1$ with $r>\frac q2$, we have
\begin{equation}\label{eq:sawin-tao bound}
    F_{q,r}(n) \leq n^{\frac{q}{2r-q}}.
\end{equation}
Indeed, suppose that $x_1,\dots,x_N \in [n]^q$ is an $r$-increasing sequence of vectors with $N=F_{q,r}(n)$. Let $Y$ be a uniformly random element of $[q]$, and let $X_a$ equal the $Y$\textsuperscript{th} coordinate of $x_a$, for all $a$. Then $(X_1,\dots,X_n)$ is a sequence of (dependent) random variables valued in $[n]$, and the $r$-increasing condition implies that $\mathbb P(X_b> X_a) \geq r/q$ for all $a<b$, since $x_b$ is strictly larger than $x_a$ in at least $r$ of its $q$ coordinates. We may thus apply \cref{prop:sawin-tao} to conclude that $F_{q,r}(n)=N \leq n^{1/(2(r/q)-1)}$, yielding \eqref{eq:sawin-tao bound}.
In particular, applying this bound with $r=q-1$, we find that
\[
    F_{q,q-1}(n) \leq n^{1+ \frac{2}{q-2}}
\]
for all $n$. When $q \geq 4$, we can combine this with \cref{prop:F=G} to conclude that $G_{q,q-1}(n) \leq n^{1+\frac{2}{q-2}}$ as well, yielding a much stronger dependence on $q$ than given in \cref{cor:applications}.

The proof of \cref{prop:sawin-tao} uses Fourier-analytic methods, and as such seems restricted to the setting of $r$-increasing vector sequences. Thanks to the equivalence \eqref{eq:Ff equivalence}, we obtain from it a strong bound on $f_{q,q-1}(N)$ as well. However, we do not see how to extend these analytic techniques to handle more general tournaments, and in particular we do not see how to deduce any bound on $g_{q,q-1}(N)$ using these techniques. Thus, while \cref{thm:main} yields weaker bounds in these applications, it appears to be a genuinely more general result.

The rest of this paper is organized as follows. We present a high-level outline of the proof of \cref{thm:main} in \cref{sec:overview}. In \cref{sec:simple}, we warm up to the topic by presenting some simple proofs, including the short proof of \cref{labelfarfromtransitive} and some properties of the product construction alluded to previously. We present the proof of \cref{thm:main} in \cref{sec:main proof}, and end in \cref{sec:conclusion} with some concluding remarks. 

In this paper, all logarithms are to base $2$. We systematically omit floor and ceiling signs whenever they are not crucial.

\section{Proof overview}\label{sec:overview}
In this section, we give a sketch of the proof of \cref{thm:main}. For the majority of our discussion, we focus on the case of \emph{transitive} tournaments; most of the ideas used in the proof of the full theorem are already present in this setting, with fewer technical complications. Although the Fourier-analytic techniques of Sawin and Tao yield stronger bounds in the transitive setting, they do not seem to extend to the general case, in contrast to the combinatorial techniques we use. At the end of the sketch, we describe which additional ingredients are needed for the general case.

As our tournament is transitive, we can simply view it as an ordered complete graph on the vertex set $[N]$; additionally, a path in this complete graph is directed if and only if it is monotone, i.e.\ its vertices are strictly increasing as integers in $[N]$. So our task is to prove that any $q$-edge-colored complete graph on $[N]$ contains a color-avoiding monotone path of length at least $c_q N^{1-C/\sqrt{\log q}}$, and we prove this by induction on $N$. For this proof sketch, we will not worry too much about the precise quantitative error term, and only aim to prove a lower bound of the form $N^{1-o(1)}$, for some function $o(1)$ that tends to $0$ as $q \to \infty$. For a $q$-edge-colored transitive tournament $T$ and a color $i \in [q]$, let $\ell_i(T)$ denote the length of the longest directed path in $T$ avoiding color $i$. Thus, our goal is to prove by induction that $\ell_i(T) \geq N^{1-o(1)}$ for some $i$. We will actually strengthen the induction hypothesis, and prove that $\Pi(T) \geq N^{q-o(q)}$, where we define $\Pi(T)=\prod_{i=1}^q \ell_i(T)$. The statement $\Pi(T) \geq N^{q-o(q)}$ immediately implies that $\ell_i(T) \geq N^{1-o(1)}$ for some $i \in [q]$, hence this really is a stronger statement.

Our strategy is as follows. We divide $[N]$ into two subintervals $T_1 = [1,N/2]$ and $T_2= (N/2,N]$, and apply the inductive hypothesis in each interval. Our dream scenario is to take a longest $i$-avoiding path $P_1$ in the first half and a longest $i$-avoiding path $P_2$ in the second half, and to glue them together to obtain an $i$-avoiding path of double the length. Of course, for this to work, we need the edge joining the end of $P_1$ and the start of $P_2$ to \emph{not} be colored with color $i$; if we can ensure this, then their concatenation $P_1 \cup P_2$ truly is an $i$-avoiding path of length $\ab{P_1}+\ab{P_2}$. Thus, if we can do this for color $i$, we learn that
\[
    \ell_i(T) \geq \ell_i(T_1)+\ell_i(T_2) \geq 2 \sqrt{\ell_i(T_1)\ell_i(T_2)},
\]
where the final step is the inequality of arithmetic and geometric means. If, in turn, we can do this for \emph{all} the colors, we conclude that
\[
    \Pi(T) =\prod_{i=1}^q \ell_i(T) \geq \prod_{i=1}^q 2\sqrt{\ell_i(T_1)\ell_i(T_2)} \geq 2^q \cdot \Pi_q(N/2),
\]
where $\Pi_q(N/2)$ is our inductive lower bound on $\Pi(T')$ for every $q$-edge-colored transitive tournament $T'$ on $N/2$ vertices. Inductively, we know that $\Pi_q(N/2)\geq (N/2)^{q-o(q)}$, hence we learn that $\Pi(T) \geq 2^q (N/2)^{q-o(q)} \geq N^{q-o(q)}$. That is, in the dream scenario, we can prove the desired result.

Of course, there is no reason for the dream scenario to be remotely close to true---we shouldn't even expect to be able to glue together a longest $i$-avoiding path in each half even for \emph{a single} color $i$, let alone for all of them. The proof now consists of showing that either we can glue together \emph{almost} longest $i$-avoiding paths for \emph{almost} all colors $i$, or else that we win for another reason; the losses inherent in these ``almosts'' are what contribute to the error term in the exponent. 

We begin by explaining the first ``almost''. Because we are allowed to tolerate errors, it is not necessary for $P_1$ and $P_2$ above to be truly the longest $i$-avoiding paths in $T_1,T_2$, respectively. Instead, it suffices that their lengths are nearly as long as possible, so that $\ell_i(T) \geq (2-o(1))\sqrt{\ell_i(T_1)\ell_i(T_2)}$. This gives us a lot more flexibility, since it now suffices to just find one such pair $P_1,P_2$, of nearly maximal length, such that the edge joining the end of $P_1$ and the start of $P_2$ is not colored with color $i$. To accomplish this, we will consider a large number of nearly maximal $i$-avoiding paths in each half, and attempt to glue together every pair.

Concretely, set $s = \varepsilon N/q$, where $\varepsilon>0$ is a small parameter depending only on $q$ (and tending to $0$ as $q \to \infty$). We wish to pick $s$ different $i$-avoiding paths in each half; moreover, since our goal is to try gluing every pair of paths from each half, we want these $s$ paths to have different endpoints. To accomplish this, let us define $\ell_i(^\to v)$ to be the length of the longest $i$-avoiding directed path ending at $v$, and similarly $\ell_i(w^\to)$ to be the length of the longest $i$-avoiding directed path starting at $w$. We now define $X_i$ to be the set of the $s$ vertices in the first half with the highest value of $\ell_i(^\to \bullet)$. Similarly, we define $Y_i$ to be the set of the $s$ vertices in the second half with the highest value of $\ell_i(\bullet^\to)$. The paths $P_1$ that we consider will then be the longest $i$-avoiding paths ending at a vertex of $X_i$, and similarly the $P_2$ we consider are the longest $i$-avoiding paths starting at a vertex of $Y_i$. Heuristically, since $s \ll N$ and $X_i$ consists of the $s$ ``best'' endpoints for $i$-avoiding paths in $T_1$, every vertex of $X_i$ is the endpoint of such a path of length almost $\ell_i(T_1)$.

We now call a color $i$ \emph{compressed} if the majority of the vertices of $X_i$ and the majority of the vertices of $Y_i$ are close to the midpoint $N/2$, i.e.\ in the interval $[\frac N2 - 4s, \frac N2 + 4s]$. The claim now is that we can glue together $i$-avoiding paths in $T_1$ and $T_2$ for nearly all compressed colors. Indeed, if $i$ is a compressed color and we cannot perform such a gluing, that means that all edges from $X_i$ to $Y_i$ are of color $i$, and in particular we have found many edges of color $i$ within the interval $[\frac N2 - 4s, \frac N2 + 4s]$. As there are only $O(s^2)$ edges within this interval, we must be able to glue for the vast majority of compressed colors: each color in which we cannot contributes many edges in this interval, and there are not so many such edges.

Thus, if nearly all colors are compressed, we are ``almost'' in the dream scenario and we can prove the claim inductively by absorbing all the losses in the $N^{-o(q)}$ factor. It remains to understand what happens when we have a large number, say $2p$, of non-compressed colors; by symmetry, we may assume that at least $p$ of them are \emph{left-diffuse}, i.e.\ that the majority of $X_i$ for these colors lies in the interval $[1,\frac N2-4s]$. Let $U_i =X_i \cap [1, \frac N2-4s]$, for each of these $p$ left-diffuse colors $i$.

The key claim now is the following. Suppose that $v,w$ are vertices, where $v$ precedes $w$ in the ordering of $[N/2]$, and assume that $v \in X_i, w \notin X_i$. Then the edge $vw$ is necessarily colored with color $i$. Indeed, if it were not, then we can extend any $i$-avoiding path ending at $v$ to a longer one ending at $w$, but then the fact that $v \in X_i,w\notin X_i$ contradicts the definition of $X_i$. This simple observation has two immediate corollaries. First, if $i,j$ are distinct left-diffuse colors, then $U_i$ and $U_j$ are disjoint. Indeed, since there are few vertices of $X_i \cup X_j$ in $[\frac N2-4s,\frac N2]$, we can find $w\notin X_i \cup X_j$ in this interval. If $v \in U_i \cap U_j$, then the observation above implies that $vw$ must receive both color $i$ and color $j$, which is impossible, hence $U_i \cap U_j=\varnothing$. The second corollary is that the color of every edge between $U_i$ and $U_j$ must be left-diffuse; indeed, by the key claim, every such edge must in fact be either of color $i$ or of color $j$.

This immediately has the following remarkable consequence. For every color $k$ which is \emph{not} one of our $p$ left-diffuse colors, we have
\[
    \ell_k(T) \geq \sum_{i\text{ left-diffuse}} \ell_k(T[U_i]) \geq p\left(\prod_{i\text{ left-diffuse}} \ell_k(T[U_i])\right)^{1/p}.
\]
Indeed, we get the desired $k$-avoiding path by using the vertices from the longest $k$-avoiding path in each $U_i$. This path is $k$-avoiding as every edge of the path is either between vertices from the same $U_i$ or must be one of the $p$ left-diffuse colors. The last inequality is the inequality of arithmetic and geometric means. In other words, we find that for $q-p$ choices of color $k$, we essentially gain a factor of $p$ in the length of the longest $k$-avoiding path. This is an enormous gain, much larger than the factor of $2$ we were winning in the dream scenario; moreover, by choosing $p = o(q)$, we can obtain such a gain for nearly all colors. The loss comes from the fact that we now need to apply the inductive hypothesis to the subtournament induced on $\bigcup_{i \text{ left-diffuse}} U_i$, which is quite small; however, by picking $p$ appropriately, we can ensure that the amount we win by multiplying the lengths by $p$ dominates this loss.

Apart from the computations, the discussion above is an essentially complete sketch of the proof of \cref{thm:main} in the case that $T$ is transitive. We now discuss the difficulties encountered when passing to the general setting, and the main additional ideas needed to overcome them. 

First, by applying \cref{labelfarfromtransitive}, we may assume that $T$ is $\delta$-close to transitive, for some small constant $\delta=\delta(q)>0$. Indeed, if this is not the case, then \cref{labelfarfromtransitive} shows that $T$ contains a path of linear length which receives at most $3$ colors, a far stronger statement than what we aim to prove. We now fix an ordering of $V(T)$ such that all but at most $\delta N^2$ edges go forward. In fact, by deleting a small number of vertices, we can even pass to a ``minimum degree'' version of this statement: for every vertex, at most $\delta N$ of its out-neighbors precede it in the ordering, and at most $\delta N$ of its in-neighbors follow it in the ordering. At this point, most of the argument presented in the transitive case goes through: while there may be a small number of edges that we cannot use, the arguments are sufficiently robust to overcome this.

However, there is a major thing that stops working, which is instrumental to using the key claim above. Namely, if $v$ precedes $w$ in the ordering, and even if $v \to w$ in $T$, we may have that $vw$ does not receive color $i$, and yet not be able to conclude that $\ell_i(w)>\ell_i(v)$. Indeed, it is possible that all long  $i$-avoiding paths ending at $v$ have already passed through $w$, and hence we cannot simply extend such a path to end at $w$. This is a fundamental issue
which is not present in the fully transitive setting: there, a directed path must be monotone, and hence cannot have already used a ``future'' vertex. 

In order to overcome this issue, we split the set of colors into \emph{long} colors---those colors $i$ where there is an $i$-avoiding path of length at least $\gamma N$, for some small $\gamma>0$---and \emph{short} colors. For short colors, the arguments used above can now be made to work: the longest $i$-avoiding path ending at $v$ has length at most $\gamma N$, hence there are relatively few vertices $w$ for which we cannot argue about the color of the edge $vw$, as most vertices $w$ are not on such a path. Dealing with these ``exceptional'' vertices now requires some care, but no genuinely new ideas. On the other hand, we know essentially nothing about the long colors, except that they are long. If we only aimed to prove that $T$ contains a long color-avoiding directed path, then long colors would immediately help us; unfortunately, we strengthened the inductive hypothesis to argue about $\Pi(T)$, so we cannot simply say we are done if there is a long color.

In order to overcome this issue, the final idea is to \emph{weaken} the inductive hypothesis. Rather than inductively studying the function $\Pi(T)$ defined above, we will instead argue inductively about the modified function
\[
    \prod_{i=1}^q \min \{\ell_i(T), \gamma N\}.
\]
For the short colors, the term controlling the minimum is $\ell_i(T)$, hence we can use the same argument as before. For long colors, the main term is $\gamma N$, and luckily this quantity is trivial to handle inductively: for example, since $\gamma N=\gamma \frac N2 + \gamma \frac N2$, we can automatically obtain the doubling we seek in the dream scenario for long colors. With these modifications, the rest of the proof goes through essentially as before.

\section{Simple proofs}\label{sec:simple}
\subsection{Proof of Theorem \ref{labelfarfromtransitive}}

\begin{proof}[Proof of Theorem \ref{labelfarfromtransitive}]
The first two authors \cite[Lemma 1.3]{FoSu} proved the following quantitative variant of the triangle removal lemma in tournaments: There is a constant $c >0$ such that if a tournament on $N$ vertices is $\delta$-far from  transitive, then it contains at least $c\delta^2 N^3$ cyclic  triangles. For each cyclic triangle, arbitrarily label the edges cyclically as $1,2,3$. Since there are $q$ colors, there are $q^3$ possible colorings of the edges of a cyclic triangle with the edges labeled as $1,2,3$. By the pigeonhole principle, there are at least $c\delta^2 N^3/q^3$ cyclic triangles whose labeled edges have the same color pattern (call this pattern $P$). Greedily delete edges from the tournament that are in fewer than $2c\delta^2 N/q^3$ such cyclic triangles of color pattern $P$.  As there are fewer than $N^2/2$ edges, we still have more than $c\delta^2 N^3/q^3 - (N^2/2)(2c\delta^2 N/q^3)=0$ cyclic triangles of color pattern $P$ remaining after we are done with this process. Each edge that remains (and there is at least one since there is at least one cyclic triangle remaining) extends to at least $2c\delta^2 N/q^3$ cyclic triangles of color pattern $P$. Starting with the vertices $v_1,v_2,v_3$ of one remaining cyclic triangle of color pattern $P$ with $v_1 \rightarrow v_2$, we can greedily build the desired directed path (and furthermore the square of a path with the desired properties) by adding, after $v_{i-1},v_i$, a vertex $v_{i+1}$ not already on the path such that $v_{i-1},v_i,v_{i+1}$ forms a cyclic triangle of color pattern $P$ in what remains. We can necessarily find such a vertex $v_{i+1}$ as long as $2c\delta^2 N/q^3 > i-1$ since each edge on the path extends to more than $2c\delta^2 N/q^3$ cyclic triangles of color pattern $P$ in what remains. Hence, the path length we get in the end is at least $2c\delta^2 N/q^3$. 
\end{proof}

\subsection{General reductions and bounds}

One useful observation is that monotone paths behave very cleanly under lexicographic product. Recall that an \emph{ordered graph} is a graph equipped with a linear order on its vertex set. Given two ordered graphs $G_1,G_2$, we define their \emph{lexicographic product} $G_1 \otimes G_2$ by first duplicating each vertex of $G_2$ to $\ab{G_1}$ vertices, and replacing each edge of $G_2$ by a complete bipartite graph between these blowup sets. We then insert a copy of $G_1$ into each of these blowup sets, while otherwise maintaining the order. That is, each original vertex of $G_2$ now corresponds to an interval of length $\ab{G_1}$ in the new ordering. 
\begin{proposition}\label{prop:monotone path product}
    Let $G_1,G_2$ be ordered graphs, and let $L_1,L_2$ be the length of the longest monotone path in $G_1,G_2$, respectively. Then the length of the longest monotone path in $G_1 \otimes G_2$ is exactly $L_1L_2$.
\end{proposition}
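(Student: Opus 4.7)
The plan is to establish the two inequalities $L_1 L_2 \leq$ length $\leq L_1 L_2$ separately, starting with the easier lower bound construction. Label each vertex of $G_1 \otimes G_2$ as a pair $(u,v)$ with $u \in V(G_1)$ and $v \in V(G_2)$, where the ordering is lexicographic: $(u,v) < (u',v')$ iff $v < v'$ in $G_2$, or $v = v'$ and $u < u'$ in $G_1$. Edges of the product are of two kinds: \emph{intra-block} edges $(u,v)(u',v)$ coming from $uu' \in E(G_1)$, and \emph{inter-block} edges $(u,v)(u',v')$ with $v \neq v'$, coming from $vv' \in E(G_2)$ (with $u,u'$ arbitrary).

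For the lower bound, I would fix a longest monotone path $u_1 < u_2 < \cdots < u_{L_1}$ in $G_1$ and one $v_1 < v_2 < \cdots < v_{L_2}$ in $G_2$, and traverse the blocks in order: inside block $v_j$, walk through $(u_1,v_j),(u_2,v_j),\dots,(u_{L_1},v_j)$ using intra-block edges from $G_1$, then jump to $(u_1,v_{j+1})$ via an inter-block edge (valid because $v_j v_{j+1} \in E(G_2)$). The resulting sequence is strictly increasing in the product order and has length $L_1 L_2$.

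For the upper bound, I would take an arbitrary monotone path $p_1,\dots,p_m$ in $G_1 \otimes G_2$, write $p_i = (u_i,v_i)$, and partition it into maximal runs on which the second coordinate is constant; call these \emph{blocks}, with distinct block-values $v^{(1)} < v^{(2)} < \cdots < v^{(k)}$. Inside a single block all consecutive edges must be intra-block, so the $u$-values inside block $j$ form a strictly increasing walk in $G_1$, which is a monotone path of length $\ell_j \leq L_1$. Between consecutive blocks the connecting edge is inter-block, forcing $v^{(j)} v^{(j+1)} \in E(G_2)$; hence $v^{(1)}, \dots, v^{(k)}$ is a monotone path in $G_2$, giving $k \leq L_2$. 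Then $m = \sum_{j=1}^{k}\ell_j \leq k L_1 \leq L_1 L_2$.

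Neither step looks like a real obstacle: the lower bound is a direct concatenation, and the upper bound is an immediate block decomposition. The only point that needs a moment of care is checking that the block-value sequence in the upper bound is genuinely a path in $G_2$ rather than a walk, which is automatic from strict monotonicity of the product order together with the fact that block values are pairwise distinct by construction.
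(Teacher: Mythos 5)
Your proof is correct and follows essentially the same approach as the paper's: the lower bound is the same replace-each-vertex-of-a-$G_2$-path-by-a-$G_1$-path construction, and the upper bound is the same block decomposition, projecting the path to $G_2$ and counting at most $L_2$ blocks with at most $L_1$ vertices each. You spell out the lexicographic coordinates and the maximal-run decomposition more explicitly, but the argument is identical in substance.
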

\begin{proof}
    The lower bound is immediate: we can replace each vertex of the longest monotone path in $G_2$ by the longest monotone path in $G_1$, thus obtaining a monotone path in $G_1 \otimes G_2$ of length $L_1 L_2$. For the upper bound, fix any monotone path in $G_1 \otimes G_2$. Note that this path can visit at most $L_2$ of the blowup parts, since every time it moves between blowup parts, it needs to use an edge from the blowup of $G_2$. Thus, under the natural projection to $G_2$, it traverses a monotone path in $G_2$, hence it visits at most $L_2$ parts. But within each part, it simply traverses a monotone path in $G_1$, hence it uses at most $L_1$ vertices in each part. In total, the number of vertices visited is at most $L_1L_2$. 
\end{proof}

As an immediate corollary, we obtain the same result for paths with restricted colors.
Let $K_1$ and $K_2$ be $q$-edge-colored ordered complete graphs (possibly on different numbers of vertices). We define their \emph{lexicographic product} $K_1 \otimes K_2$ in the same way as above: we duplicate each vertex of $K_2$ to $\ab{K_1}$ vertices, replace each edge of $K_2$ by a complete bipartite graph of the same color, and insert a copy of $K_1$ into each of these blowup sets, while otherwise maintaining the order. 
If $S\subseteq [q]$ is a set of colors, we say that a path is \emph{$S$-colored} if it is colored only by colors in $S$. 
\begin{proposition}\label{prop:lex product}
    Let $K_1,K_2$ be two $q$-edge-colored ordered complete graphs, and let $S \subseteq[q]$ be an arbitrary set of colors. Let $L_1,L_2$ be the length of the longest monotone $S$-colored path in $K_1,K_2$, respectively. Then the length of the longest $S$-colored monotone path in $K_1 \otimes K_2$ is exactly $L_1 L_2$. 
\end{proposition}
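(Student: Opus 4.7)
The plan is to carry out essentially the same argument as in the proof of \cref{prop:monotone path product}, with minor bookkeeping to track the $S$-coloring throughout. Neither direction requires a new idea; the point is just to verify that the constructions and projections used there preserve the property of being $S$-colored. I therefore expect no serious obstacle, only the need to be careful about colors of blowup edges.

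For the lower bound, first I would fix a monotone $S$-colored path $Q_2$ of length $L_2$ in $K_2$ and a monotone $S$-colored path $Q_1$ of length $L_1$ in $K_1$. In $K_1\otimes K_2$, I replace each vertex of $Q_2$ by a translated copy of $Q_1$ sitting inside the corresponding blowup block (which is itself a copy of $K_1$). This gives a monotone path of length $L_1L_2$. The edges traversed inside a block come from $K_1$ and are $S$-colored by the choice of $Q_1$; the edges connecting consecutive blocks come from the blowup of edges of $Q_2$, so by the definition of the lexicographic product they inherit colors from $K_2$ and are $S$-colored by the choice of $Q_2$. Hence the whole path is $S$-colored of length $L_1L_2$.

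For the upper bound, take any monotone $S$-colored path $P$ in $K_1\otimes K_2$. As in the proof of \cref{prop:monotone path product}, the blowup blocks form consecutive intervals of the linear order on $K_1\otimes K_2$, so once $P$ leaves a block it cannot return, and the projection of $P$ to $K_2$ is a monotone path in $K_2$. The key additional point is that each edge of this projection corresponds to an edge of $P$ that crosses blocks, and such an edge inherits the color of the corresponding edge of $K_2$; since $P$ is $S$-colored, the projection is a monotone $S$-colored path in $K_2$, so it visits at most $L_2$ blocks. Within each visited block, the portion of $P$ is a monotone $S$-colored path in $K_1$, so it uses at most $L_1$ vertices. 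Multiplying yields the upper bound $L_1L_2$, matching the construction above and completing the proof.
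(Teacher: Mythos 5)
Your proof is correct and uses the same underlying ideas as the paper. The paper's own proof is slightly slicker: rather than re-running the argument of \cref{prop:monotone path product} while tracking colors, it simply observes that if $G_1,G_2$ are the subgraphs of $K_1,K_2$ consisting of $S$-colored edges, then $G_1\otimes G_2$ is exactly the subgraph of $S$-colored edges of $K_1\otimes K_2$, so the colored statement is an immediate corollary of the uncolored one. Your version re-derives this corollary from scratch, which is a bit longer but mathematically equivalent.
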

\begin{proof}
    Let $G_1,G_2$ be the ordered graphs comprising all edges in $K_1,K_2$, respectively, colored by a color in $S$. Then $G_1 \otimes G_2$ is precisely the set of edges in $K_1 \otimes K_2$ receiving a color in $S$. Thus, the claimed result follows immediately from \cref{prop:monotone path product}.
\end{proof}
One consequence of \cref{prop:lex product} is the upper bound $g_{q,r}(N) \leq f_{q,r}(N) \leq N^{r/q}$ for all $N$ which are a power of $q$. Indeed, let $N=m^q$, and let $K_1,\dots,K_q$ be ordered cliques on $m$ vertices, where all edges in $K_i$ are given color $i$. By \cref{prop:lex product}, for every $S \subseteq [q]$, the length of the longest $S$-colored monotone path in $K = K_1\otimes \dots \otimes K_q$ is precisely $m^{\ab S}$; in particular, the longest monotone path in $K$ receiving at most $r$ colors has length exactly $m^r = (m^q)^{r/q}$, proving the bound $f_{q,r}(N) \leq N^{r/q}$ for $N=m^q$.

Another immediate corollary of \cref{prop:lex product} is that the function $f_{q,r}(N)$ is sub-multiplicative. 
\begin{corollary}\label{cor:submultiplicative}
    For all integers $N_1,N_2, q,r$, we have $f_{q,r}(N_1)f_{q,r}(N_2) \geq f_{q,r}(N_1N_2)$.
\end{corollary}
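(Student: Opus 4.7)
The plan is to construct a bad coloring on $N_1 N_2$ vertices by taking the lexicographic product of bad colorings on $N_1$ and $N_2$ vertices, and then to invoke \cref{prop:lex product} to control all the relevant color-restricted monotone path lengths at once.

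Concretely, I would first choose $q$-edge-colored ordered complete graphs $K_1, K_2$ on $N_1, N_2$ vertices respectively that are extremal, i.e.\ $f_{q,r}(K_i) = f_{q,r}(N_i)$; such graphs exist by the definition of $f_{q,r}(N_i)$ as the minimum over all colorings. I would then consider the lexicographic product $K := K_1 \otimes K_2$, which is a $q$-edge-colored ordered complete graph on $N_1 N_2$ vertices. By definition,
\[
    f_{q,r}(N_1 N_2) \leq f_{q,r}(K),
\]
so it suffices to bound $f_{q,r}(K)$ by $f_{q,r}(N_1) f_{q,r}(N_2)$.

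Next, let $P$ be any monotone path in $K$ that uses at most $r$ colors, and let $S \subseteq [q]$ be the exact set of colors appearing on its edges, so $|S| \leq r$. Then $P$ is an $S$-colored monotone path in $K$, so by \cref{prop:lex product} its length equals $L_1(S) L_2(S)$, where $L_i(S)$ denotes the length of the longest $S$-colored monotone path in $K_i$. Since $|S| \leq r$, each $L_i(S)$ is itself the length of a monotone path in $K_i$ using at most $r$ colors, and hence $L_i(S) \leq f_{q,r}(K_i) = f_{q,r}(N_i)$. Combining gives $|P| = L_1(S) L_2(S) \leq f_{q,r}(N_1) f_{q,r}(N_2)$, and taking the maximum over all such $P$ yields $f_{q,r}(K) \leq f_{q,r}(N_1) f_{q,r}(N_2)$, completing the argument.

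There is no real obstacle here: the proof is essentially a one-line deduction from \cref{prop:lex product}, and the only point that warrants a moment's care is the observation that an arbitrary monotone path using at most $r$ colors must be $S$-colored for \emph{some} $|S| \leq r$, which lets us feed it into \cref{prop:lex product} and then bound each factor by the extremal quantity $f_{q,r}(N_i)$.
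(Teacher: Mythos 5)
Your proof is correct and follows essentially the same route as the paper: take the lexicographic product of extremal colorings $K_1 \otimes K_2$ and apply \cref{prop:lex product} to the color set $S$ used by an arbitrary monotone path. The only slip is that where you write ``its length equals $L_1(S) L_2(S)$'' you mean ``is at most,'' since $P$ need not be the longest $S$-colored path; this does not affect the argument.
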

\begin{proof}
    Let $K_1,K_2$ be $q$-edge-colored ordered complete graphs on $N_1,N_2$ vertices, respectively, with the property that every monotone path in $K_i$ which uses at most $r$ colors has length at most $f_{q,r}(N_i)$. Let $K = K_1 \otimes K_2$, so that $f_{q,r}(N_1 N_2) \leq f_{q,r}(K)$. On the other hand, for every set $S \subseteq [q]$ of exactly $r$ colors, we know from \cref{prop:lex product} that the maximum length of an $S$-colored monotone path in $K$ is at most $f_{q,r}(K_1)f_{q,r}(K_2)$, implying that $f_{q,r}(N_1 N_2)\leq f_{q,r}(K) \leq f_{q,r}(K_1)f_{q,r}(K_2) = f_{q,r}(N_1)f_{q,r}(N_2)$, as claimed. 
\end{proof}
One simple consequence of \cref{cor:submultiplicative} is that the limiting behavior of $f_{q,r}(N)$ is as a power of $N$, as stated in the following proposition.
\begin{proposition}
    For every $q >r \geq 1$, there exists some $\alpha \in [0,1]$ such that $f_{q,r}(N)=N^{\alpha+o(1)}$ as $N \to \infty$.
\end{proposition}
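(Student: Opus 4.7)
The plan is to invoke Fekete's lemma using the submultiplicativity established in \cref{cor:submultiplicative}. First I would define $g(k) = \log f_{q,r}(2^k)$ for $k \geq 0$. Applying \cref{cor:submultiplicative} with $N_1 = 2^k$ and $N_2 = 2^\ell$ gives $f_{q,r}(2^{k+\ell}) \leq f_{q,r}(2^k)\,f_{q,r}(2^\ell)$, and taking logarithms yields the subadditivity $g(k+\ell) \leq g(k) + g(\ell)$. Fekete's lemma then implies that $g(k)/k$ converges to its infimum $\beta = \inf_{k \geq 1} g(k)/k$ as $k \to \infty$. Setting $\alpha = \beta/\log 2$, this gives $\log f_{q,r}(2^k)/\log(2^k) \to \alpha$ along the subsequence $N = 2^k$.

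Next I would extend this to all $N$ via monotonicity. Note that $f_{q,r}$ is monotonically non-decreasing in $N$: given an $(N+1)$-vertex $q$-edge-colored clique $K$ witnessing the minimum $f_{q,r}(K) = f_{q,r}(N+1)$, deleting any vertex produces an $N$-vertex ordered coloring $K'$ with $f_{q,r}(K') \leq f_{q,r}(K)$, so $f_{q,r}(N) \leq f_{q,r}(N+1)$. Given arbitrary $N$, pick the unique $k$ with $2^k \leq N < 2^{k+1}$; monotonicity then yields
\[
\frac{k}{k+1}\cdot\frac{g(k)}{k\log 2} \;=\; \frac{\log f_{q,r}(2^k)}{\log 2^{k+1}} \;\leq\; \frac{\log f_{q,r}(N)}{\log N} \;\leq\; \frac{\log f_{q,r}(2^{k+1})}{\log 2^k} \;=\; \frac{k+1}{k}\cdot\frac{g(k+1)}{(k+1)\log 2}.
\]
Both outer expressions tend to $\alpha$ as $N \to \infty$, so the squeeze theorem gives $\log f_{q,r}(N)/\log N \to \alpha$, equivalently $f_{q,r}(N) = N^{\alpha+o(1)}$.

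Finally, to check $\alpha \in [0,1]$, observe that $1 \leq f_{q,r}(N) \leq N$ for every $N \geq 1$ (a single vertex is always a valid monotone path, and no path has more than $N$ vertices). Hence $0 \leq g(k) \leq k\log 2$ for all $k$, so $\beta \in [0,\log 2]$ and $\alpha \in [0,1]$. There is no real obstacle here: the argument is a textbook application of Fekete's lemma, and the only point requiring any care is verifying the monotonicity of $f_{q,r}(N)$ in $N$, which is immediate from passing to a sub-clique.
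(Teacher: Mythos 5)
Your proposal is correct and takes essentially the same route as the paper: apply Fekete's lemma to the subadditive sequence $\log f_{q,r}(2^k)$ (whose subadditivity comes from \cref{cor:submultiplicative}), then use monotonicity of $f_{q,r}$ to sandwich a general $N$ between consecutive powers of $2$, with $\alpha\in[0,1]$ following from the trivial bounds $1\leq f_{q,r}(N)\leq N$. The only difference is that you explicitly verify the monotonicity of $f_{q,r}$ by passing to a sub-clique, a step the paper uses without comment.
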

\begin{proof}
    Fix $q >r\geq 1$, and define $s_k = \log (f_{q,r}(2^k))$ for every integer $k \geq 0$.
    Then the sequence $(s_k)_{k\geq 1}$ is sub-additive, since \cref{cor:submultiplicative} shows that
    \[
    s_{k_1+k_2} = \log (f_{q,r}(2^{k_1+k_2})) \leq \log (f_{q,r}(2^{k_1}) f_{q,r} (2^{k_2})) = \log (f_{q,r}(2^{k_1}))+ \log (f_{q,r}(2^{k_2})) = s_{k_1}+s_{k_2}
    \]
    for all integers $k_1,k_2\geq 0$. Recall that Fekete's lemma states that for every sub-additive sequence $(s_k)$, the limit $\lim_{k \to \infty} s_k/k$ exists (but may be $-\infty$). Let us denote this limit by $\alpha$. Note that since $f_{q,r}(N)\geq 1$ for all $N$, we have that $s_k \geq 0$ for all $k$, and therefore $\alpha\geq 0$. Similarly, since $f_{q,r}(N)\leq N$ for all $N$, we have $s_k \leq k$ for all $k$, and thus $\alpha \leq 1$.

    Finally, we note that $s_k/k=\alpha+o(1)$, or equivalently $s_k =\alpha k +o(k)$, or equivalently $f_{q,r}(2^k) = 2^{\alpha k + o(k)}=(2^k)^{\alpha+o(1)}$. This proves the claim in case $N$ is of the form $2^k$ for some integer $k$. For all other $N$, we argue as follows: if $k = \flo{\log N}$, so that $2^k \leq N < 2^{k+1}$, then we have $2^k = N^{1+o(1)}$ and $2^{k+1}=N^{1+o(1)}$. Since the function $f_{q,r}$ is monotone, we have
    \[
    N^{\alpha+o(1)} = (2^k)^{\alpha+o(1)} = f_{q,r}(2^k) \leq f_{q,r}(N) \leq f_{q,r}(2^{k+1}) = (2^{k+1})^{\alpha+o(1)} = N^{\alpha+o(1)}.\qedhere
    \]
\end{proof}

As another consequence of \cref{cor:submultiplicative}, we can prove a ``finitary'' version of this limiting statement, similar to the use of the tensor product trick in the study of Sidorenko's conjecture (e.g.\ \cite{MR1682960,MR2738996}). It shows that whenever we prove a lower bound of the form $f_{q,r}(N) \geq N^{\alpha-o(1)}$ for some $\alpha$ and some $o(1)$ term tending to zero as $N\to \infty$, we can automatically deduce the stronger bound $f_{q,r}(N) \geq N^\alpha$ for all $N$.
\begin{proposition}\label{prop:c=1}
    Suppose that for all $N$, we have the estimate $f_{q,r}(N) \geq N^{\alpha-\varepsilon(N)}$, for some function $\varepsilon(N)$ with $\lim_{N\to \infty}\varepsilon(N)=0$. Then, in fact, we have $f_{q,r}(N) \geq N^\alpha$ for all $N$.
\end{proposition}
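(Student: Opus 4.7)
The plan is to apply the tensor-power trick, a standard tool for converting asymptotic lower bounds into finitary ones (and explicitly alluded to in the paper in the context of Sidorenko's conjecture). The two key ingredients are the sub-multiplicativity bound from \cref{cor:submultiplicative}, namely $f_{q,r}(N_1 N_2) \leq f_{q,r}(N_1)\,f_{q,r}(N_2)$, together with the hypothesized asymptotic lower bound $f_{q,r}(M) \geq M^{\alpha - \varepsilon(M)}$, applied at very large values of $M$.

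First I would fix an arbitrary $N \geq 2$ and an arbitrary integer $k \geq 1$, and iterate \cref{cor:submultiplicative} $k-1$ times to deduce the upper bound $f_{q,r}(N^k) \leq f_{q,r}(N)^k$. Next I would combine this with the hypothesis applied at the very large input $M = N^k$, which yields $f_{q,r}(N)^k \geq f_{q,r}(N^k) \geq (N^k)^{\alpha - \varepsilon(N^k)}$. Taking $k$-th roots then produces the clean intermediate bound $f_{q,r}(N) \geq N^{\alpha - \varepsilon(N^k)}$, valid for every integer $k \geq 1$.

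Finally, since $N \geq 2$ we have $N^k \to \infty$ as $k \to \infty$, so the hypothesis $\lim_{M \to \infty} \varepsilon(M) = 0$ forces $\varepsilon(N^k) \to 0$; letting $k \to \infty$ in the previous inequality thus yields $f_{q,r}(N) \geq N^\alpha$, which is the claim. The boundary case $N = 1$ is handled trivially by noting that $f_{q,r}(1) = 1 = 1^\alpha$.

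I do not anticipate any real obstacle here; the argument is essentially obstacle-free once one sets it up correctly. The one subtle point worth flagging is the apparent ``wrong direction'' of \cref{cor:submultiplicative}: it bounds $f_{q,r}(N^k)$ from \emph{above} in terms of $f_{q,r}(N)$, whereas the conclusion we seek is a lower bound on $f_{q,r}(N)$. However, this is precisely the direction needed for the tensor-power trick to work, because it allows us to trade an asymptotic lower bound at the very large input $N^k$ for a genuine non-asymptotic lower bound at the finite input $N$, with the error term pushed to an input where it can be made arbitrarily small.
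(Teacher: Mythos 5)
Your proof is correct and takes essentially the same approach as the paper: both rely on iterating the sub-multiplicativity of \cref{cor:submultiplicative} to relate $f_{q,r}(N)$ to $f_{q,r}(N^k)$ and then push the error term $\varepsilon$ out to the large input $N^k$. The paper phrases it as a proof by contradiction (writing $f_{q,r}(N)=N^{\alpha-\delta}$ and choosing $t$ with $\varepsilon(N^t)<\delta$), whereas you argue directly by taking $k$-th roots and letting $k\to\infty$, but these are the same argument.
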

\begin{proof}
    Suppose for contradiction that $f_{q,r}(N) < N^\alpha$ for some $N$. In particular, we may write $f_{q,r}(N)=N^{\alpha-\delta}$ for some constant $\delta>0$.  By \cref{cor:submultiplicative}, for every $t\geq 1$, we have
    \[
        f_{q,r}(N^t) \leq f_{q,r}(N)^t = (N^{\alpha-\delta})^t=(N^t)^{\alpha-\delta}.
    \]
    Now, we pick $t$ sufficiently large so that $\varepsilon(N^t)<\delta$, which contradicts our assumption that $f_{q,r}(N^t) \geq (N^t)^{\alpha-\varepsilon(N^t)}$. 
\end{proof}

Recall from \cref{sec:overview} that in our proof of \cref{thm:main}, we work inductively with the quantity
\[
    \Pi(T) \coloneqq \prod_{i=1}^q \ell_i(T),
\]
where $\ell_i(T)$ denotes the length of the longest $i$-avoiding directed path in $T$.
A priori, working with this product
may appear to be wasteful, since perhaps the longest color-avoiding path is much longer than the geometric mean of all of them. As an immediate consequence of \cref{prop:lex product}, we can show that in the setting of transitive tournaments, this is not the case, and that the two notions are equivalent for large $N$.
\begin{proposition}
    Let $K$ be an $N$-vertex $q$-edge-colored ordered complete graph. There is a $q$-edge-colored ordered clique $\wt K$ on $N^q$ vertices for which $\ell_i(\wt K)=\Pi(K)$ for all $i \in [q]$.
\end{proposition}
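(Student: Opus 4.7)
The plan is to construct $\wt K$ as an iterated lexicographic product of cyclically color-permuted copies of $K$, so that for every single color $i$, the product structure forces the longest $i$-avoiding path to pick up a factor of $\ell_j(K)$ for each color $j$ exactly once.

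More concretely, let $\sigma\colon [q]\to [q]$ be the cyclic shift $i\mapsto i+1\pmod q$. For each $j\in [q]$, let $K_j$ denote the $q$-edge-colored ordered clique obtained from $K$ by relabeling every edge of color $i$ with the color $\sigma^{j-1}(i)$. I will then set
\[
    \wt K \;\coloneqq\; K_1 \otimes K_2 \otimes \cdots \otimes K_q,
\]
which is an ordered $q$-edge-colored complete graph on $N^q$ vertices.

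Next I would apply \cref{prop:lex product} iteratively (a straightforward induction on the number of factors): for every subset $S\subseteq[q]$ of allowed colors, the longest $S$-colored monotone path in $\wt K$ has length $\prod_{j=1}^{q} L_j(S)$, where $L_j(S)$ denotes the length of the longest $S$-colored monotone path in $K_j$. Because $K_j$ is obtained from $K$ by applying $\sigma^{j-1}$ to the colors, an $S$-colored path in $K_j$ is exactly a $\sigma^{-(j-1)}(S)$-colored path in $K$, so $L_j(S)$ equals the longest $\sigma^{-(j-1)}(S)$-colored monotone path in $K$.

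Finally, fix a color $i\in [q]$ and take $S=[q]\setminus\{i\}$. Then $\sigma^{-(j-1)}(S)=[q]\setminus\{\sigma^{-(j-1)}(i)\}$, so $L_j(S)=\ell_{\sigma^{-(j-1)}(i)}(K)$. As $j$ runs over $[q]$, the index $\sigma^{-(j-1)}(i)$ runs through each element of $[q]$ exactly once, giving
\[
    \ell_i(\wt K) \;=\; \prod_{j=1}^{q} \ell_{\sigma^{-(j-1)}(i)}(K) \;=\; \prod_{k=1}^{q}\ell_k(K) \;=\; \Pi(K),
\]
as required. The only real idea here is to cyclically permute colors across the factors so that every color gets its turn being the ``missing'' one; once that is done, \cref{prop:lex product} does all the work and there is no serious obstacle.
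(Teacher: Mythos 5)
Your construction—taking the lexicographic product $K_1\otimes\cdots\otimes K_q$ of cyclically color-permuted copies of $K$ and applying \cref{prop:lex product} iteratively—is precisely the paper's own proof, just spelled out with slightly more explicit bookkeeping of the permutation $\sigma$. The argument is correct.
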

\begin{proof}
    Let $K_1,\dots,K_q$ be obtained from $K$ by cyclically permuting the colors $q$ times. That is, $K_1=K$, and $K_t$ is obtained from $K_{t-1}$ by recoloring each edge of color $i$ to color $i+1\pmod q$. Let $\wt K=K_1\otimes \dots \otimes K_q$, so that $\ab{\wt K}=N^q$. By \cref{prop:lex product}, we know that
    \[
        \ell_i(\wt K) = \prod_{t=1}^q \ell_i(K_t).
    \]
    But $K_t$ is obtained from $K$ by cyclically permuting the colors, hence $\prod_{t=1}^q \ell_i(K_t) = \prod_{i=1}^q \ell_i(K)=\Pi(K)$, as claimed. 
\end{proof}

Finally,
the following lemma records some simple relations between the quantities $f_{q,r}(N)$ and $g_{q,r}(N)$ for different choices of the parameters $r$ and $q$. 
\begin{lemma}\label{lem:merge colors}
    Let $q >r \geq 1$ and $N\geq 1$ be integers.
    \begin{enumerate}[label=(\arabic*)]
        \item We have that $f_{q,r}(N) \geq f_{q-t, r-t}(N)$ and $g_{q,r}(N) \geq g_{q-t,r-t}(N)$ for all $0 \leq t<r$.\label{it:merge t}
        \item We have $f_{q,r}(N) \geq f_{q/d,r/d}(N)$ and $g_{q,r}(N) \geq g_{q/d,r/d}(N)$ for all $d$ dividing $\gcd(q,r)$. \label{it:merge d}
        \item If $p = \flo{\frac{q}{q-r}}\geq 2$, we have $f_{q,r}(N) \geq f_{p,p-1}(N)$ and $g_{q,r}(N) \geq g_{p,p-1}(N)$.\label{it:take floor}
    \end{enumerate}
\end{lemma}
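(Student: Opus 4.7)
The plan is to handle all three items by a single elementary device: merge colors to obtain a coarser edge-coloring, apply the definition of $f_{q',r'}$ or $g_{q',r'}$ to the coarser coloring, and pull the resulting path back to the original. The key observation is that when several original colors are identified into one derived color, a path that uses one derived color uses at most that many original colors; so a path using few derived colors has its total number of original colors controlled by the sizes of the touched classes.

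For part (1), given a $q$-edge-colored ordered clique (or tournament) on $N$ vertices, I would fix any $t+1$ of the $q$ colors and identify them, producing a $(q-t)$-edge-coloring. By definition this derived coloring contains a monotone (resp.\ directed) path of length at least $f_{q-t,r-t}(N)$ (resp.\ $g_{q-t,r-t}(N)$) using at most $r-t$ derived colors. In the original coloring, at most one of these derived colors corresponds to the merged block of $t+1$ colors, so the path uses at most $(r-t-1) + (t+1) = r$ original colors, as required.

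For part (2), I would partition the $q$ colors into $q/d$ groups of exactly $d$ colors each and merge each group, producing a $(q/d)$-edge-coloring. A path that uses at most $r/d$ derived colors pulls back to a path using at most $(r/d)\cdot d = r$ original colors, so applying the definition of $f_{q/d,r/d}(N)$ (or $g_{q/d,r/d}(N)$) yields the required path.

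For part (3), write $s = q-r$ so that $p = \lfloor q/s \rfloor \geq 2$ and in particular $ps \leq q$. I would partition the $q$ colors into $p$ classes, taking $p-1$ of them to have size exactly $s$ and the last to have size $q - (p-1)s = s + (q - ps) \geq s$. After merging, this is a $p$-edge-coloring in which any path avoiding a single derived color avoids at least $s$ original colors, i.e., uses at most $r = q - s$ of them; applying $f_{p,p-1}(N)$ (or $g_{p,p-1}(N)$) to this derived coloring finishes the argument. The whole proof is bookkeeping, and I do not expect any genuine obstacle: the only step worth double-checking is that every derived class in part (3) contains at least $s$ original colors, which is precisely the point of setting $p = \lfloor q/s \rfloor$.
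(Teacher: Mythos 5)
Your proof is correct and uses essentially the same color-merging device as the paper. The only (cosmetic) difference is in part (3): the paper derives it formally from parts (1) and (2) by first merging $t+1$ colors and then grouping into blocks of size $q-r$, whereas you construct the partition into $p$ classes of size at least $q-r$ directly and argue from scratch; both amount to the same merging and give the same bound.
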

\begin{proof}
    Let $T$ be a $q$-edge-colored $N$-vertex tournament. For \ref{it:merge t}, arbitrarily select $t+1$ colors and merge them into a single color, thus obtaining a $(q-t)$-edge-colored tournament. Any directed path in this auxiliary coloring which receives at most $r-t$ colors corresponds to a directed path in $T$ receiving at most $r$ colors, proving that $g_{q,r}(N) \geq g_{q-t,r-t}(N)$. As this argument does not change the tournament, we can restrict it to transitive tournaments to obtain the claim for $f_{q,r}$. 

    For \ref{it:merge d}, we arbitrarily group the $q$ colors into $q/d$ sets of size $d$, and merge the colors in each set. We obtain a $(q/d)$-edge-colored tournament, in which every directed path receiving at most $r/d$ colors corresponds to a path in $T$ receiving at most $r$ colors. This shows $g_{q,r}(N)\geq g_{q/d,r/d}(N)$, and applying the same argument to transitive tournaments yields the claim for $f_{q,r}$.

    Finally, for \ref{it:take floor}, write $q = p(q-r)+t$ for some $0 \leq t<q-r$. Note that $p \geq 2$ implies $q \geq 2q-2r$, or equivalently $q-r \leq r$, so $0\leq t< r$. We also have $q-t=p(q-r)$ and $(p-1)(q-r)=p(q-r)-(q-r)=(q-t)-(q-r)=r-t$ by definition of $t$. Therefore,
    \[
        f_{q,r}(N) \geq f_{q-t,r-t}(N) = f_{p(q-r),(p-1)(q-r)}(N)\geq f_{p,p-1}(N),
    \]
    using \ref{it:merge t} in the first inequality and \ref{it:merge d} in the second. The exact same argument holds for $g_{q,r}$.
\end{proof}
Although these bounds are very simple, they are sufficient to determine $f_{q,r}$ and $g_{q,r}$ in certain special cases. For example, recall that the Chv\'atal--Gy\'arf\'as--Lehel theorem states that $f_{q,1}(N) \geq g_{q,1}(N)\geq N^{1/q}$; both bounds are tight if $N$ is a power of $q$. Using \cref{lem:merge colors}\ref{it:merge d}, we conclude that for all $d \geq 1$,
\[
    f_{qd,d}(N) \geq g_{qd,d}(N)\geq g_{q,1}(N) \geq N^{1/q},
\]
which is again tight for $N$ a power of $q$ by \cref{prop:lex product}.

\section{Proof of Theorem \ref{thm:main}}\label{sec:main proof}

Henceforth, whenever working with a $q$-coloring, we will always assume that the palette of colors is $[q]$. Given a $q$-colored tournament $T$ and a color $i \in [q]$, we let $\ell_i(T)$ denote the length of the longest directed path in $T$ which does not use the color $i$. Recall that we define the length of a path to be the number of vertices it comprises, so that $\ell_i(T)\geq 1$ for every $T$ and every $i \in [q]$. 

We define a parameter $\gamma$ by
\begin{equation}\label{eq:gamma def}
    \gamma = \frac{1}{2q\cdot 2^{\sqrt{\log q}}}.
\end{equation}
For an $N$-vertex $q$-edge-colored tournament $T$, we then define
\[
    m_i(T) \coloneqq \min\{\ell_i(T), \gamma N\}
\]
and 
\[
    \Pi(T) \coloneqq \prod_{i=1}^q m_i(T)= \prod_{i=1}^q \min\{\ell_i(T),\gamma N\}
\]
and finally let $\Pi_q(N)$ be the minimum value of $\Pi(T)$ over all $N$-vertex $q$-colored tournaments $T$. 

Our main technical result, which immediately implies \cref{thm:main}, is the following lower bound for $\Pi_q(N)$.
\begin{proposition}\label{prop:T main product}
    We have that $\Pi_q(N) \geq c_q N^{q-Cq/\sqrt{\log q}}$, where $C>0$ is an absolute constant and $c_q>0$ is a constant depending only on $q$.
\end{proposition}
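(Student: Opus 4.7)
I would prove the bound $\Pi(T) \geq c_q N^{q - Cq/\sqrt{\log q}}$ by strong induction on $N$, with a suitably small base case handled by taking $c_q$ small. For the inductive step, I first dichotomize on how far $T$ is from being transitive. If $T$ is $\delta$-far from transitive for some parameter $\delta = \delta(q)$ to be fixed, then \cref{labelfarfromtransitive} yields a directed path of length $\Omega(\delta^2 N/q^3)$ that uses only three colors. This path gives lower bounds $\ell_i(T) = \Omega(\delta^2 N/q^3)$ for $q-3$ indices simultaneously, which (together with the trivial bound $\ell_i(T) \geq 1$ for the other three and a suitable choice of constants) already exceeds the desired bound on $\Pi(T)$. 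So I may assume $T$ is $\delta$-close to transitive; fix an ordering of $V(T)$ having at most $\delta N^2$ backward edges, and (after deleting a negligible vertex subset) assume every vertex has at most $O(\delta N)$ backward in- and out-neighbors.

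\textbf{Splitting and classifying colors.} I split the ordered vertex set into halves $T_1, T_2$, set $s = \varepsilon N/q$ for a small parameter $\varepsilon = \varepsilon(q)$, and for each color $i$ let $X_i \subseteq V(T_1)$ be the $s$ vertices maximizing the length of the longest $i$-avoiding directed path ending there, and $Y_i \subseteq V(T_2)$ the analogous best starting vertices. I classify $[q]$ into \emph{long} colors (those with $\ell_i(T_j) \geq \gamma N$ for some $j$), \emph{short compressed} colors (a majority of each of $X_i, Y_i$ lies in the middle window $[N/2-4s, N/2+4s]$), and \emph{short non-compressed} colors. For long colors, the identity $\gamma N = 2 \cdot \gamma(N/2)$ shows that the factor $m_i(T) = \gamma N$ in $\Pi(T)$ automatically doubles the inductive factor $m_i(T_j) \leq \gamma N/2$, giving the dream-scenario gain for free. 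For short compressed colors, if every $X_i$-to-$Y_i$ edge were colored $i$, that would force $\Theta(s^2)$ color-$i$ edges into the $O(s)$-wide window around $N/2$; since there are only $O(s^2)$ edges in that window to distribute among $q$ colors, pigeonhole produces, for all but a tiny number of compressed colors, a forward pair $(v,w) \in X_i \times Y_i$ whose edge is not color $i$. Concatenating the near-optimal $i$-avoiding paths at $v$ and $w$ yields $\ell_i(T) \geq (1-o(1))(\ell_i({}^\to v) + \ell_i(w^\to)) \geq (2-o(1))\sqrt{m_i(T_1)\,m_i(T_2)}$ by AM-GM.

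\textbf{Non-compressed colors and the AM-GM gain.} Suppose the set of short non-compressed colors has size $2p$; by symmetry at least $p$ of them are left-diffuse, meaning a positive fraction of $X_i$ lies in $[1, N/2 - 4s]$, and I let $U_i$ denote this subset. The key claim is that if $v \in X_i$ precedes $w \in V(T_1)\setminus X_i$ with $v \to w$, then the edge $vw$ must be colored $i$; otherwise a longest $i$-avoiding path ending at $v$ extends past $w$, contradicting $w \notin X_i$. For non-transitive $T$ this is salvaged precisely by the shortness of $i$: the only obstruction is that the path ending at $v$ already used $w$, and since $i$ is short there are at most $\gamma N$ such bad candidates $w$, a negligible set. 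From the key claim, distinct left-diffuse $U_i, U_j$ are pairwise disjoint, and every forward edge between them uses one of the $2p$ non-compressed colors. Hence for each of the $q-2p$ remaining colors $k$, the union of $k$-avoiding paths inside each $T[U_i]$ concatenates into a single $k$-avoiding path, so by AM-GM $\ell_k(T) \geq \sum_i \ell_k(T[U_i]) \geq p \bigl(\prod_i \ell_k(T[U_i])\bigr)^{1/p}$. Applying induction to each $T[U_i]$ (of size $\Theta(s)$) produces a multiplicative gain of $p^{q-2p}$ in $\Pi(T)$, which overwhelms the loss from shrinking $N$ to $s$ provided $p$ is chosen on the order of $2^{\sqrt{\log q}}$, exactly matching the error exponent $O(q/\sqrt{\log q})$.

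\textbf{Main obstacle.} The hardest part will be the combinatorial balancing: choosing $\varepsilon$, $\delta$, $\gamma$, and $p$ so that the gains (factor of $2$ per color in the compressed case, factor of $p$ per color in the non-compressed case) outweigh the accumulated losses---the $\delta N^2$ bad edges near the midpoint, the exceptional vertices where the key claim fails for short colors, the slack from ``near-optimal rather than optimal'' endpoints in $X_i, Y_i$, and the size reduction from $N$ to $s$ in the induction. Each loss must be absorbed into the single multiplicative error $N^{-Cq/\sqrt{\log q}}$, and the precise choice $\gamma = 1/(2q\cdot 2^{\sqrt{\log q}})$ from \eqref{eq:gamma def} is calibrated so that the long-color trivial doubling and the short-color key claim remain simultaneously valid.
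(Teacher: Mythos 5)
Your overall strategy mirrors the paper's proof closely: induction on $N$, dichotomy via \cref{labelfarfromtransitive} and \cref{lem:clean degrees}, the long/short color split, the sets $X_i, Y_i$, the compressed/non-compressed split, and gluing via AM-GM. Both cases are present and the parameter roles ($\gamma, s, p, \delta$) are correctly identified. However, there are two genuine gaps.

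\textbf{The gluing in the non-compressed case is not justified.} You assert that for a color $k\notin\Delta_L$, ``the union of $k$-avoiding paths inside each $T[U_i]$ concatenates into a single $k$-avoiding path.'' This works in the transitive setting (where every increasing sequence is a directed path, and the exceptional set in the key claim vanishes), but not here. First, the union of the vertex sets need not order into a directed path because of backward edges. Second, and more importantly, the key claim only says the forward edge $vw$ is colored $\iota(v)$ when $w$ lies outside the exceptional set $E_{\iota(v)}(v)$, which has size up to $2s$ — not negligible when you are gluing at specific endpoints. So if the last vertex $v^*$ of the path in $U_i$ has the first vertex of the next path in $E_{\iota(v^*)}(v^*)$, your concatenation fails, and nothing in your argument rules this out. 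The paper resolves this with \cref{claim:gluing structure}: it abandons gluing the $T[U_i]$ directly and instead builds consecutive blocks $S_a \subseteq U$ of size $\geq s$ separated by carefully chosen \emph{connector vertices} $v_a$, where a greedy/double-counting argument on a bipartite ``exceptional-incidence'' graph guarantees all edges $v_a\to S_a\to v_{a+1}$ are forward and in $\Delta_L$. This is the main technical idea in Case~2 and is missing from your plan. \cref{claim:T glue paths} then produces the gluing and the AM-GM gain.

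\textbf{The choice of $p$.} You suggest taking $p$ on the order of $2^{\sqrt{\log q}}$. With the recursion shrinking $N$ to $s\approx N/(q\cdot 2^{\sqrt{\log q}})$, the multiplicative loss in $N^{q}$ is of order $q^{-q}2^{-q\sqrt{\log q}}$, and the ``gain'' from the exponent slack $(N/s)^{Cq/\sqrt{\log q}}$ is of order $2^{Cq\sqrt{\log q}}$; for the product to exceed this you need the AM-GM gain $t^{q-p}$ (with $t\asymp p$) to be at least about $q^q 2^{-(C-1)q\sqrt{\log q}}$, which forces $p$ of order $q/2^{\sqrt{\log q}}$, not $2^{\sqrt{\log q}}$. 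These differ exponentially (e.g.\ $q=2^{100}$ gives $2^{10}$ vs.\ $2^{90}$); with your choice, Case~2 does not close and no power savings results. The paper's $p = 24q/2^{\sqrt{\log q}}$ in \eqref{eq:T parameters} is calibrated to satisfy both the Case~1 constraint $p\lesssim q/\sqrt{\log q}$ and this Case~2 constraint.

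Minor points: the paper defines ``long'' as $\ell_i(T)\geq\gamma N$ (not $\ell_i(T_j)\geq\gamma N$), and after deleting $X_i,Y_i$ the inductive calls are to $\Pi_q(N/2 - sq)$ rather than to the halves directly; you gesture at these but should be careful when filling in the details.
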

Note that \cref{prop:T main product} immediately implies \cref{thm:main}. Indeed, fix some $q$-colored tournament $T$ on $N$ vertices. If $\ell_i(T)\geq \gamma N$ for some $i\in [q]$, then we are done since $\gamma N \geq c_q N^{1-C/\sqrt{\log q}}$, by picking $c_q$ to be smaller than $\gamma$. Hence we may assume that $\ell_i(T)<\gamma N$ for all $i$, meaning that $\Pi(T)=\prod_{i=1}^q \ell_i(T)$. Therefore, there exists some $i \in [q]$ for which $\ell_i(T) \geq \Pi(T)^{1/q} \geq \Pi_q(N)^{1/q}$. Hence $T$ contains a path of length $\Pi_q(N)^{1/q} \geq c_q N^{1-C/\sqrt{\log q}}$ which avoids color $i$, as claimed in \cref{thm:main}.

Before presenting the proof of \cref{prop:T main product}, we record one more tool that we will use, namely a simple degree cleaning lemma which allows us to pass from a nearly transitive tournament to an ordered graph with high minimum degree. We recall that an \emph{ordered graph} is a graph equipped with a linear ordering $\prec$ of its vertex set. We say that an ordered graph $G$ is \emph{consistent} with a tournament on the same vertex set if, for all $v\prec w$, we have that $vw \in E(G)$ if and only if $v \to w$ in $T$. Note that a tournament and an ordering of its vertices uniquely defines an ordered graph (with the same ordering) that is consistent with it; conversely, every ordered graph uniquely determines a tournament consistent with it. 
\begin{lemma}\label{lem:clean degrees}
    Let $0<\delta<\frac 12$, and let $T_0$ be an $N_0$-vertex tournament which is $\delta^2$-close to transitive. Then there exists an integer $N \geq (1-\delta)N_0$, an $N$-vertex subtournament $T$ of $T_0$, and an ordered $N$-vertex graph $G$ with the following properties. $G$ is consistent with $T$, and every vertex of $G$ has at most $4\delta N$ non-neighbors in $G$. 
\end{lemma}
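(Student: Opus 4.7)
The plan is to directly exploit the hypothesis. By the definition of being $\delta^2$-close to transitive, there is a linear ordering $\prec$ of $V(T_0)$ under which at most $\delta^2 N_0^2$ edges of $T_0$ are \emph{backward} (oriented from a $\prec$-later vertex to a $\prec$-earlier one). I would define $G_0$ to be the ordered graph on $V(T_0)$ with this ordering whose edge set is exactly the forward edges of $T_0$; then $G_0$ is consistent with $T_0$ by construction, and a pair of vertices fails to be adjacent in $G_0$ precisely when the corresponding edge of $T_0$ is backward.

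The main step will be a one-shot Markov-style cleanup on backward degrees. Writing $\beta(v)$ for the number of backward edges incident to $v$, double-counting gives $\sum_v \beta(v) \leq 2\delta^2 N_0^2$. I would then discard the set $X = \{v \in V(T_0) : \beta(v) > 2\delta N_0\}$, which by Markov has size $\ab X \leq \delta N_0$, and take $T$ to be the subtournament of $T_0$ induced on $V(T_0)\setminus X$, $G$ the induced ordered subgraph of $G_0$ on the same vertex set, and $N = N_0 - \ab X \geq (1-\delta)N_0$. Consistency is preserved under induced restriction, so $G$ is consistent with $T$; and every surviving vertex has at most $2\delta N_0$ non-neighbors in $G$, since deleting vertices can only decrease $\beta(v)$.

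Finally, I would translate the bound $2\delta N_0$ on non-neighbors into the desired $4\delta N$ bound. Using $N \geq (1-\delta)N_0$ together with the assumption $\delta < \tfrac{1}{2}$, we have $N_0 \leq N/(1-\delta) \leq 2N$, which immediately gives $2\delta N_0 \leq 4\delta N$, matching the claimed constant exactly. There is no genuine obstacle here: the lemma is an elementary extraction, and the only thing to check with any care is that the specific threshold $2\delta N_0$ (rather than some other multiple of $\delta N_0$) is what yields precisely the constants $(1-\delta)$ and $4\delta$ demanded in the statement.
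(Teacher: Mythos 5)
Your proof is correct and follows essentially the same approach as the paper: both count backward edges in a near-transitive ordering, use a Markov-type bound to discard the at most $\delta N_0$ vertices incident to more than $2\delta N_0$ backward edges, and then observe that $2\delta N_0 \leq 4\delta N$ since $\delta < \tfrac12$ gives $N_0 \leq 2N$. The only (purely notational) difference is that the paper first forms the graph $H_0$ of backward edges, prunes it to a bounded-degree graph $H$, and takes $G = \overline{H}$, whereas you work directly with the forward-edge graph and its complement.
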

\begin{proof}
    By assumption, we can reverse the orientation of at most $\delta^2 N_0^2$ edges in $T_0$ in order to obtain a transitive tournament. Let us label $V(T)$ as $w_1,\dots,w_{N_0}$ according to this transitive ordering. Thus, in $T$, at most $\delta^2 N_0^2$ edges are backwards with respect to this ordering, i.e.\ there are at most $\delta^2 N_0^2$ pairs $(w_a,w_b)$ with $a<b$ and $w_a \leftarrow w_b$. Let $H_0$ be the graph of these backwards edges, i.e.\ the graph with vertex set $\{w_1,\dots,w_{N_0}\}$, where we set $(w_a,w_b)$ an edge for $a<b$ if and only if $w_a\leftarrow w_b$ in $T$. By assumption, $e(H_0)\leq (\delta N_0)^2$. Therefore, $H_0$ has at most $\delta N_0$ vertices of degree greater than $2\delta N_0$. By deleting these vertices, we obtain a graph $H$ on $N \geq N_0-\delta N_0 =(1-\delta) N_0$ vertices with maximum degree at most $2\delta N_0 \leq 4\delta N$. To conclude, we set $G =\overline H$, and set $T$ to be the subtournament induced on the vertices of $G$.
\end{proof}

We are now ready to prove \cref{prop:T main product}, and hence also \cref{thm:main}.
\begin{proof}[Proof of \cref{prop:T main product}]
    Recall that $\Pi_q(N) \geq 1$ for all $N$ and $q$, hence the result is vacuously true if $q$ is fixed and $C$ is large. Thus, we can and will assume henceforth that $q$ is at least some large absolute constant. The proof now proceeds by induction on $N$ for each fixed $q$; again, by picking $c_q$ sufficiently small, we can make the result vacuously true if $N$ is small, hence we have the base case of our induction. 

    We begin by defining several parameters that we will need during the proof. We let 
    \begin{equation}\label{eq:T parameters}
        p = \frac{24q}{2^{\sqrt{\log q}}}, \qquad s =2\gamma N= \frac{N}{q\cdot 2^{\sqrt{\log q}}}, \qquad \delta=\frac \gamma 8 = \frac{1}{16 q\cdot 2^{\sqrt{\log q}}}.
    \end{equation}
    To help the reader keep track of these, we note that in the proof, we will pick out $q$ disjoint sets of exactly $s$ vertices, hence we want $s$ to be smaller, but not much smaller, than $N/q$. Additionally, we will eventually encounter a special set of $p$ colors, hence we want $p$ to be smaller, but not much smaller, than $q$. The precise quantities defined above are obtained by optimizing the final contributions of the various steps of the proof. It will also be useful to notice for the future that $4\delta N = s/4$.

    We now fix some $N_0$, and assume that we have proved the result for all smaller values of $N$. We also fix a $q$-edge-colored tournament $T_0$ on $N_0$ vertices, and we aim to show that $\Pi(T_0) \geq c_q N_0^{q-Cq/\sqrt{\log q}}$. We first split into cases depending on whether or not $T_0$ is $\delta^2$-close to transitive, where we recall that $\delta$ is defined in \eqref{eq:T parameters}. If it is not, then we apply \cref{labelfarfromtransitive}. We find that $T_0$ contains a directed path of length at least $c\delta^4 N_0/q^3$, where $c>0$ is an absolute constant, which is colored by at most $3$ colors. Without loss of generality, let us assume that these three colors are $1,2,3$. This means that for every $i \in [q]\setminus\{1,2,3\}$, we have $\ell_i(T_0) \geq c\delta^4 N_0/q^3$.  As a consequence,
    \begin{align*}
        \Pi(T_0) &=\prod_{i=1}^q m_i(T) \geq \prod_{i=4}^q \min\{\ell_i(T_0),\gamma N_0\} \geq \prod_{i=4}^q \min \left\{ \frac{c\delta^4}{q^3},\gamma \right\}\cdot N_0=\left( \min \left\{ \frac{c\delta^4}{q^3},\gamma \right\} \right)^{q-3} N_0^{q-3}.
    \end{align*}
    Since $\delta$ and $\gamma$ both depend only on $q$, the first term is a positive constant depending only on $q$, and in particular can be made larger than $c_q$ by choosing $c_q$ appropriately. Moreover, by picking $C$ appropriately, we can ensure that $Cq/\sqrt{\log q}\geq 3$ for all $q$, hence $N_0^{q-3}\geq N_0^{q-Cq/\sqrt{\log q}}$. Therefore, we conclude that $\Pi(T_0) \geq c_q N_0^{q-Cq/\sqrt{\log q}}$, as desired. 

    We may thus assume that $T_0$ is $\delta^2$-close to transitive. We now apply \cref{lem:clean degrees} to pass to a subtournament $T$ on $N \geq (1-\delta)N_0$ vertices, as well as an ordered graph $G$ consistent with $T$ in which every vertex has at most $4\delta N$ non-neighbors. As $T$ is a subtournament of $T_0$, we have $\Pi(T_0)\geq \Pi(T)$, and therefore it suffices to prove that $\Pi(T) \geq c_q N_0^{q-Cq/\sqrt{\log q}}$. Moreover, since $N \geq (1-\delta)N_0\geq 2^{-2\delta}N_0$, it in turn suffices to prove that 
    \begin{equation}\label{eq:goal}
    \Pi(T)\geq 2^{\delta q}\cdot c_q\cdot N^{q-Cq/\sqrt{\log q}},
    \end{equation}
    which will be our goal for the remainder of the proof. We recall that our inductive hypothesis is that for all $N'<N$, we have $\Pi_q(N') \geq c_q\cdot (N')^{q-Cq/\sqrt{\log q}}$.

    We say that a color $i\in [q]$ is \emph{long} if $\ell_i(T)\geq \gamma N$, and we say that it is \emph{short} otherwise. Note that for any color $i \in [q]$, we have 
    \[
        m_i(T) = \min\{\ell_i(T), \gamma N\} = 
        \begin{cases}
            \ell_i(T) & \text{if $i$ is short},\\
            \gamma N & \text{if $i$ is long}.
        \end{cases}
    \]
    The long colors play a special role in our argument, and must be dealt with separately. In most of what follows, we argue almost exclusively about the short colors. 

    As $G$ is an $N$-vertex ordered graph, we henceforth identify the vertices of $G$ with the interval $[N]$, so that the ordering $\prec$ of $G$ agrees with the standard ordering of $[N]$. We alternate between these two notations as convenient.
    We begin by partitioning $[N]=V(G)=V(T)$ into four intervals $A,B,C,D$, of lengths $\frac N2-4s, 4s, 4s, \frac N2-4s$, respectively, where we recall that $s$ was defined in \eqref{eq:T parameters}. That is, we have
    \[
        A = [1,\tfrac N2-4s], \qquad B = (4s,\tfrac N2], \qquad C = (\tfrac N2, \tfrac N2+4s], \qquad D = (\tfrac N2+4s,N].
    \]
    For a vertex $v \in A\cup B$ and a color $i\in [q]$, we denote by $\ell_i(^\to v)$ the length of the longest directed path in $T[A \cup B]$ which ends at the vertex $v$ and whose edges avoid the color $i$. Note that we are restricting only to those directed paths which are entirely contained in $A\cup B$. Similarly, for $v\in C\cup D$, we denote by $\ell_i(v^\to)$ the length of the longest $i$-avoiding directed path in $T[C\cup D]$ that starts at the vertex $v$.

    For every color $i \in [q]$, we let $X_i$ consist of the $s$ vertices in $A \cup B$ which have the largest values of $\ell_i({}^\to v)$. That is, for every vertex $v \in A \cup B$, we compute the integer $\ell_i(^\to v)$, we rank the vertices of $A \cup B$ according to these integers, and then let $X_i$ comprise the $s$ top-most vertices in this ranking. If there are ties in the ranking, we break them arbitrarily. Similarly, we define $Y_i$ to comprise the $s$ vertices in $C \cup D$ with the largest values of $\ell_i(v^\to)$. By definition, $\ab{X_i}=\ab{Y_i}=s$ for all $i$.

    Let $0\leq r\leq q$ be the number of long colors in $T$. We split the $q-r$ short colors into several further types, as follows.
    Let $i$ be a short color. We call the color $i$ \emph{left-condensed} if $\ab{X_i \cap B} \geq s/2$, and \emph{left-diffuse} if $\ab{X_i\cap B}<s/2$. Similarly, $i$ is \emph{right-condensed} if $\ab{Y_i\cap C}\geq s/2$, and \emph{right-diffuse} otherwise. Finally, we call color $i$ \emph{condensed} if it is both left-condensed and right-condensed. We stress that we only apply these terms to short colors; in particular, whenever we speak of a condensed or left-diffuse color in what follows, that color will always also be short. 
    We now split into cases depending on the number of condensed colors, recalling that there are exactly $r$ long colors and $q-r$ short colors. 

    \paragraph{\bf Case 1: There are at least $\bm{q-r-2p}$ condensed colors} 
    Let $\Lambda \subseteq [q]$ denote the set of long colors, and let $\Xi \subseteq [q] \setminus \Lambda$ denote the set of condensed colors.
    Recall that every vertex of $G$ has at most $4\delta N = s/4$ non-neighbors. As a consequence, for every $i \in \Xi$, the number of edges of $G$ between $X_i \cap B$ and $Y_i \cap C$ is at least
    \[
        \sum_{v \in X_i \cap B} \left(\ab{Y_i \cap C}-\frac s4\right) \geq \sum_{v \in X_i\cap B} \left( \frac s2-\frac s4 \right) =\frac s4 \ab{X_i \cap B} \geq \frac{s^2}{8},
    \]
    where we use that color $i$ is right-condensed in the first inequality, and that it is left-condensed in the final inequality.
    
    Let us call a color $i \in \Xi$ \emph{useless} if all edges of $G$ between $X_i \cap B$ and $Y_i \cap C$ are of color $i$, and \emph{useful} otherwise. By the previous computation, if $i$ is useless, then there are at least $s^2/8$ color-$i$ edges between $B$ and $C$. Since there are at most $\ab B \ab C = 16s^2$ edges of $G$ between $B$ and $C$, we conclude that the number of useless colors is at most $(16s^2)/(s^2/8)=128$. 

    By definition, for every useful color $i$, there is at least one edge $v_iw_i\in E(G)$, with $v_i \in X_i \cap B$ and $w_i \in Y_i \cap C$, which does not receive color $i$ (for if there were no such edge, then $i$ would be useless). As $G$ is consistent with $T$ and $B$ precedes $C$ in the ordering, we know that the edge $v_iw_i$ is directed as $v_i\to w_i$. These facts imply that $T$ contains an $i$-avoiding directed path of length $\ell_i(^\to v_i)+\ell_i(w_i^\to)$, obtained by taking the longest $i$-avoiding directed path in $T[A\cup B]$ ending at $v_i$, concatenating it with the edge $v_i\to w_i$, and then concatenating that with the longest $i$-avoiding directed path in $T[C\cup D]$ starting at $w_i$. This is indeed a directed path since $v_i \to w_i$ and since the two paths we are concatenating have disjoint vertex sets; additionally, since the edge $v_i w_i$ does not receive color $i$, this is indeed an $i$-avoiding directed path.

    Let $T'$ be the subtournament of $T$ induced on the vertex set $(A \cup B) \setminus \bigcup_{i=1}^q X_i$. Note that $T'$ has at least $N'=\frac N2 - sq$ vertices. Moreover, for the special vertex $v_i$ found above, we claim that $\ell_i(^\to v_i) \geq \ell_i(T')$. Indeed, the longest $i$-avoiding directed path in $T'$ is entirely contained in $V(T')\subseteq A\cup B$, and ends at some vertex $u \in V(T')$, which in particular does not belong to $X_i$, as $X_i$ is disjoint from $V(T')$. As $X_i$ comprises the $s$ vertices with the highest values of $\ell_i(^\to \bullet)$, we see that $\ell_i(^\to v_i) \geq \ell_i(^\to u)=\ell_i(T')$.  Similarly, if we let $T''$ be the subtournament induced on $(C \cup D) \setminus \bigcup_{i=1}^q Y_i$, then $\ell_i(w_i^\to) \geq \ell_i(T'')$ for each $i \in [q]$. 

    Now, for each useful color $i \in \Xi$, we have that
    \[
        \ell_i(T) \geq \ell_i(^\to v_i)+\ell_i(w_i^\to) \geq \ell_i(T')+\ell_i(T'') \geq 2\sqrt{\ell_i(T') \ell_i(T'')},
    \]
    where the final bound is the inequality of arithmetic and geometric means. Since every such color is short, we also know that $m_i(T)=\min\{\ell_i(T),\gamma N\}=\ell_i(T)$, hence
    \[
        m_i(T)=\ell_i(T)\geq 2 \sqrt{\ell_i(T')\ell_i(T'')}\geq 2\sqrt{m_i(T')m_i(T'')}
    \]
    for every useful $i \in \Xi$.
    Similarly, for every long color $i \in \Lambda$, we have that $m_i(T)=\min\{\ell_i(T),\gamma N\}=\gamma N$, hence
    \[
        m_i(T) = \gamma N \geq \gamma \ab{T'}+\gamma \ab{T''} \geq m_i(T')+m_i(T'') \geq 2\sqrt{m_i(T')m_i(T'')}.
    \]
    In other words, we have found that for at least $q-2p-128$ of the colors $i$ (namely the useful or long colors), the inequality $m_i(T) \geq 2 \sqrt{m_i(T')m_i(T'')}$ holds. The remaining $2p+128$ colors are short, hence satisfy $m_i(T)=\ell_i(T)$. Moreover, for these remaining colors, we trivially have the inequality
    \[
        m_i(T)=\ell_i(T) \geq \max\{\ell_i(T'),\ell_i(T'')\} \geq \sqrt{\ell_i(T')\ell_i(T'')}\geq \sqrt{m_i(T')m_i(T'')}.
    \]
    Putting this all together, we find that
    \[
        \Pi(T) = \prod_{i=1}^q m_i(T) \geq 2^{q-2p-128} \prod_{i=1}^q\sqrt{m_i(T')m_i(T'')} = 2^{q-2p-128} \sqrt{\Pi(T')\Pi(T'')}.
    \]
    Finally, recall that $T'$ and $T''$ are both $q$-edge-colored tournaments on at least $N'=\frac N2-sq$ vertices. By the definition of $\Pi_q(N')$, this shows that $\Pi(T'), \Pi(T'') \geq \Pi_q(N')$. In conclusion, we have that
    \[
        \Pi(T) \geq 2^{q-2p-128} \Pi_q(N').
    \]
    By the inductive hypothesis, we know that $\Pi_q(N')\geq c_q (N')^{q-Cq/\sqrt{\log q}}$.
    Additionally, by the definition of $s$ in \eqref{eq:T parameters}, we know that $N' = (\frac 12-\frac{1}{2^{\sqrt{\log q}}})N$. Since we may assume that $q$ is sufficiently large, we have that $2^{\sqrt{\log q}}\geq \sqrt{\log q}$, and thus $N'\geq (\frac 12-\frac{1}{{\sqrt{\log q}}})N$.  Therefore,
    \begin{align*}
        \frac{(N')^{q-Cq/\sqrt{\log q}}}{N^{q-Cq/\sqrt{\log q}}} &\geq \left( \frac 12-\frac{1}{{\sqrt{\log q}}} \right)^{q-Cq/\sqrt{\log q}} = 2^{-q+Cq/\sqrt{\log q}} \left( 1-\frac{2}{{\sqrt{\log q}}} \right)^{q-Cq/\sqrt{\log q}}.
    \end{align*}
    Using the inequality $1-x \geq 2^{-2x}$, valid for all $x \in [0,\frac 12]$ (and recalling that $q$ is at least a sufficiently large constant, so that we may apply this inequality), we have that
    \begin{align*}
        \left( 1-\frac{2}{{\sqrt{\log q}}} \right)^{q-Cq/\sqrt{\log q}} \geq\left( 1-\frac{2}{{\sqrt{\log q}}} \right)^q \geq 2^{-4q/\sqrt{\log q}}.
    \end{align*}
    Combining these bounds, we conclude that
    \begin{align*}
        \frac{\Pi(T)}{c_q N^{q-Cq/\sqrt{\log q}}} &\geq \frac{2^{q-2p-128} \Pi_q(N')}{c_q N^{q-Cq/\sqrt{\log q}}}\geq 2^{q-2p-128} \cdot \frac{(N')^{q-Cq/\sqrt{\log q}}}{N^{q-Cq/\sqrt{\log q}}}\\
        &\geq 2^{q-2p-128} \cdot 2^{-q+Cq/\sqrt{\log q}} \cdot 2^{-4q/\sqrt{\log q}}\\
        &= 2^{(C-4)q/\sqrt{\log q} -2p -128}.
    \end{align*}
    Finally, we recall from \eqref{eq:T parameters} that $p = 24q/2^{\sqrt{\log q}} \leq q/\sqrt{\log q}$, where the final inequality holds since $q$ is sufficiently large. Therefore, the exponent above is at least $(C-6)q/\sqrt{\log q}-128$. In particular, as $C$ is sufficiently large, we have that this exponent is at least $2q/{\sqrt{\log q}}\geq 2\delta q$ for all $q$. We conclude that $\Pi(T) \geq 2^{2\delta q}\cdot c_q\cdot N^{q-Cq/\sqrt{\log q}}$. 
    This is precisely the inequality \eqref{eq:goal} that we set out to prove, and thus this completes the proof in Case 1.

    \paragraph{\bf Case 2: There are fewer than $\bm{q-r-2p}$ condensed colors.} In this case, there are at least $2p$ non-condensed colors, implying that there are at least $p$ left-diffuse colors or at least $p$ right-diffuse colors. The two cases are handled identically (in fact they are equivalent upon reversing the orientation of $T$), so we may assume without loss of generality that there are at least $p$ left-diffuse colors. We let $\Delta_L\subseteq [q]$ be a set of exactly $p$ left-diffuse colors.

    The proof in this case relies repeatedly on the following simple, but remarkably useful, observation, which follows immediately from the definition of the set $X_i$. 
    \begin{claim}\label{claim:exceptional}
        Let $i$ be a short color, and let $v \in X_i$. There is a set $E_i(v) \subseteq A \cup B$ with $\ab{E_i(v)}< 2s$ such that the following holds for all $w \in (A \cup B) \setminus  E_i(v)$. The vertices $v$ and $w$ are adjacent in $G$, and if $v\to w$ in $T$, then the color of the edge $vw$ is $i$.
    \end{claim}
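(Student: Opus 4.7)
My plan is to build $E_i(v)$ as the union of three explicit sets, each small by design, so that together they account for every way in which one of the two advertised conclusions can fail. To secure the first conclusion, I put all $G$-non-neighbors of $v$ lying in $A\cup B$ into $E_i(v)$. By the degree property delivered by \cref{lem:clean degrees}, every vertex of $G$ has at most $4\delta N = s/4$ non-neighbors, so this contributes at most $s/4$ vertices, and by definition every $w$ outside this part is adjacent to $v$ in $G$.

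The real content lies in the second conclusion, and this is where the short-color hypothesis enters. I fix a longest $i$-avoiding directed path $P_v$ in $T[A\cup B]$ ending at $v$; since $i$ is short, $\ab{V(P_v)} = \ell_i({}^\to v) \leq \ell_i(T) < \gamma N = s/2$. I then also adjoin $X_i$ to $E_i(v)$. The key extension argument is the following: if $w \in (A\cup B) \setminus E_i(v)$ satisfies $v \to w$ in $T$ and the edge $vw$ does \emph{not} have color $i$, then because $w \notin V(P_v)$ and $w \neq v$ (note $v \in X_i \subseteq E_i(v)$), I may append the edge $v \to w$ to $P_v$ to obtain an $i$-avoiding directed path in $T[A\cup B]$ ending at $w$ of length $\ell_i({}^\to v)+1$. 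This yields $\ell_i({}^\to w) > \ell_i({}^\to v)$, which contradicts $v \in X_i$ together with $w \notin X_i$, since $X_i$ is chosen so that every vertex of $A\cup B$ outside $X_i$ has $\ell_i({}^\to \cdot)$ no larger than that of any vertex of $X_i$.

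Adding the three contributions gives $\ab{E_i(v)} \leq s/4 + s/2 + s = 7s/4 < 2s$, which is exactly the advertised bound. I do not foresee any genuine obstacle: the short-color hypothesis is precisely what keeps $V(P_v)$ short enough to fit into the budget, and the top-$s$ definition of $X_i$ (with arbitrary tie-breaking) is exactly what converts a strict increase in $\ell_i({}^\to \cdot)$ into forced membership in $X_i$. The only subtlety worth flagging is that the extension really yields a path rather than merely a walk, and this is automatic from $w \notin V(P_v) \cup \{v\}$, which is guaranteed by $V(P_v) \cup X_i \subseteq E_i(v)$.
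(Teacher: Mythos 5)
Your proof is correct and follows essentially the same route as the paper: you take $E_i(v) = V(P_v) \cup \ol N(v) \cup X_i$, bound its size by $s/2 + s/4 + s < 2s$ using the short-color hypothesis and the degree bound from \cref{lem:clean degrees}, and then run the identical extension argument showing that a non-$i$-colored forward edge $v \to w$ with $w \notin E_i(v)$ would force $\ell_i({}^\to w) > \ell_i({}^\to v)$, contradicting $v \in X_i$, $w \notin X_i$.
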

    Here, we think of the set $E_i(v)$ as a set of ``exceptional'' vertices: apart from this small set of vertices, every out-directed edge of $v$ to $A \cup B$ receives color $i$. 
    \begin{proof}
        Let $\ol N(v)$ be the non-neighborhood of $v$ in $G$.
        We fix an $i$-avoiding directed path $P$ in $T[A\cup B]$ ending at $v$ of length $\ell_i(^\to v)$. As the color $i$ is short, we must have $\ab{V(P)}=\ell_i(^\to v)<\gamma N$. Now let $E_i(v)=V(P) \cup \ol N(v) \cup X_i$; we claim that this definition satisfies the desired property. 

        First, we have that $\ab{E_i(v)}\leq \ab{V(P)}+\ab{\ol N(v)}+\ab{X_i} \leq \gamma N+4\delta N+s < 2s$. Additionally, as $\ol N(v)\subseteq E_i(v)$, we certainly have that $v$ is adjacent in $G$ to all vertices in $(A\cup B) \setminus E_i(v)$. For the final property, fix some $w \in (A\cup B) \setminus E_i(v)$ with $v\to w$. As $w \notin X_i$ (since $X_i \subseteq E_i(v)$), we have that
        $\ell_i(^\to v) \geq \ell_i(^\to w)$, for $X_i$ consists of the $s$ vertices with the largest value of $\ell_i(^\to \bullet)$. Moreover, as $w \notin E_i(v)$, we see that $w \notin V(P)$. Consider the path $P +w$, obtained from $P$ by adding the edge $v\to w$ as the final edge. This is indeed a directed path, since $v\to w$ and $w \notin V(P)$. Additionally, if the edge $vw$ does not receive color $i$, then $P+w$ is an $i$-avoiding directed path in $T[A \cup B]$ ending at $w$. Therefore its length is at most $\ell_i(^\to w)$, from which we find
        \[
            \ell_i(^\to v) \geq \ell_i(^\to w) \geq \ab{V(P+w)} = \ab{V(P)}+1 = \ell_i(^\to v)+1,
        \]
        a contradiction. Thus, the edge $vw$ must receive color $i$. 
    \end{proof}

    For $i \in \Delta_L$, let $U_i = X_i \cap A$. As the color $i$ is left-diffuse, we know that $\ab{X_i \cap B}<s/2$, hence $\ab{U_i}\geq s/2$. 
    The following immediate consequence of \cref{claim:exceptional} shows that the sets $\{U_i : i \in \Delta_L\}$ are pairwise disjoint.
    \begin{claim}\label{claim:T Ui disjoint}
        Let $i,j \in \Delta_L$ be left-diffuse colors with $i \neq j$. Then the sets $U_i$ and $U_j$ are disjoint.
    \end{claim}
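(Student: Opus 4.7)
The plan is to argue by contradiction: suppose some vertex $v$ lies in $U_i \cap U_j$ for distinct left-diffuse colors $i \neq j$. Since $U_i = X_i \cap A$ and $U_j = X_j \cap A$, the vertex $v$ lies in $A \cap X_i \cap X_j$, so I may invoke \cref{claim:exceptional} twice, with the same $v$ but with the two different colors, to obtain exceptional sets $E_i(v), E_j(v) \subseteq A \cup B$, each of size strictly less than $2s$. The goal is to produce a single witness vertex $w$ which is good for both applications of the claim, so that the edge $vw$ is forced to be colored $i$ (by the application with color $i$) and also colored $j$ (by the application with color $j$), yielding the contradiction $i = j$.

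To find $w$, I would look for it inside the interval $B$. Since $v \in A$ precedes every vertex of $B$ in the ordering $\prec$, every $w \in B$ automatically satisfies $v \prec w$. Moreover, by the construction used to prove \cref{claim:exceptional}, each $E_i(v)$ contains the non-neighborhood of $v$ in $G$, so any $w \in B \setminus E_i(v)$ is adjacent to $v$ in $G$; consistency of $G$ with $T$ then gives $v \to w$ in $T$. Thus it is enough to find $w \in B \setminus \bigl(E_i(v) \cup E_j(v)\bigr)$, and once such a $w$ is produced the two applications of \cref{claim:exceptional} yield the desired contradiction on the color of $vw$.

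The existence of $w$ is just a counting check: $|B| = 4s$, while $|E_i(v)| + |E_j(v)| < 2s + 2s = 4s$, so $B \setminus \bigl(E_i(v) \cup E_j(v)\bigr)$ is nonempty. I do not expect a serious obstacle; the only conceptual point is recognizing that \cref{claim:exceptional} can be applied twice to the same out-edge of $v$ so as to force two colors on it, and the numerical slack $|E_i(v)| + |E_j(v)| < |B|$ is exactly what the parameter choices $s = 2\gamma N$ and $|B| = 4s$ were designed to guarantee.
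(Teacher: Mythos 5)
Your proof is correct and follows essentially the same route as the paper: take $v \in U_i \cap U_j$, use $|E_i(v)| + |E_j(v)| < 4s = |B|$ to find $w \in B \setminus (E_i(v) \cup E_j(v))$, note $v \prec w$ forces $v \to w$ in $T$, and apply \cref{claim:exceptional} twice to conclude $vw$ must receive both colors $i$ and $j$. The only cosmetic difference is that you justify $v$'s adjacency to $w$ via the construction of $E_i(v)$ rather than citing it directly as part of the conclusion of \cref{claim:exceptional}, but the argument is the same.
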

    \begin{proof}
        Suppose for contradiction that there is some vertex $v \in U_i \cap U_j$. Recall that $\ab B=4s$, and note that
        \[
            \ab{E_i(v)} + \ab{E_j(v)} < 2s+2s = 4s = \ab B,
        \]
        by \cref{claim:exceptional}.
        This implies that there is some vertex $w \in B \setminus (E_i(v) \cup E_j(v))$. By \cref{claim:exceptional}, we have $vw \in E(G)$. Moreover, since $v \in U_i \subseteq A$ and $w \in B$, we know that $v \prec w$, hence must have that $v\to w$ in $T$. Thus, by \cref{claim:exceptional}, the edge $vw$ receives color $i$. 
        But for the exact same reason, it must receive color $j$, a contradiction.
    \end{proof}
    Let $U = \bigcup_{i \in \Delta_L}U_i$. 
    By definition, for every $v \in U$, we must have that $v \in U_i$ for some $i\in \Delta_L$; moreover, this $i$ is unique by \cref{claim:T Ui disjoint}. Thus, for a vertex $v \in U$, we denote by $\iota(v)$ the unique index $i\in \Delta_L$ for which $v \in U_{i}$.
    
    Thanks to \cref{claim:exceptional}, we know a great deal about the colors of edges within $U$: almost all such edges are colored according to the index $\iota$ of their left endpoint. In particular, almost all edges in $U$ are colored with colors from $\Delta_L$. 
    In most of the remainder of the proof, we will use this structure to show that for all colors $k \notin \Delta_L$, we can find a long $k$-avoiding directed path within $U$; this is plausible, since we know that the edges within $U$ that \emph{are} colored $k$ must be quite restricted.

    Our first goal in this direction is to construct an appropriate structure within $U$ that we will use to glue long $k$-avoiding paths together. The precise structure we use is given in the following claim. For a vertex $v$ and vertex subset $S$, we write $v \prec S$ if $v \prec w$ for all $w \in S$, and we write $S \prec v$ if $w \prec v$ for all $w \in S$. 
    \begin{claim}\label{claim:gluing structure}
        There exist vertices $v_1,\dots,v_{t+1} \in U$, as well as sets $S_1,\dots,S_{t}\subseteq U$, with the following properties.
        \begin{enumerate}[label=(\roman*)]
            \item We have $t=p/24$ and $\ab{S_a}\geq s$ for all $a \in [t]$.\label{it:sizes}
            \item We have $v_1 \prec S_1 \prec v_2 \prec S_2 \prec v_3 \prec \dots \prec v_{t} \prec S_{t}\prec v_{t+1}$.\label{it:order}
            \item $S_a$ is contained in the out-neighborhood of $v_a$ and in the in-neighborhood of $v_{a+1}$, for all $a \in [t]$. That is, the edges are directed as $v_a \to S_a \to v_{a+1}$. \label{it:direction}
            \item All edges from $v_a$ to $S_a$ and all edges from $S_a$ to $v_{a+1}$ are colored with a color from $\Delta_L$.\label{it:color}
        \end{enumerate}
    \end{claim}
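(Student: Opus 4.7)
The plan is to construct the $v_a$'s and $S_a$'s greedily by partitioning $U$ into consecutive blocks and picking them out left-to-right. Since \cref{claim:T Ui disjoint} tells us that the $p$ sets $\{U_i : i \in \Delta_L\}$ are pairwise disjoint and each has size at least $s/2$ (because each color $i \in \Delta_L$ is left-diffuse), we have $\ab{U} \geq ps/2 = 12ts$. Assuming $q$ is sufficiently large that $t \geq 3$, we may partition $U$ into $2t+1$ consecutive intervals $I_1 \prec I_2 \prec \cdots \prec I_{2t+1}$ (with respect to $\prec$), each of size at least $5s$; indeed, this requires $12ts \geq 5s(2t+1)$, which reduces to $2t \geq 5$. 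The idea is then to select $v_a \in I_{2a-1}$ for $a = 1,\dots,t+1$ and $S_a \subseteq I_{2a}$ for $a = 1,\dots,t$, which automatically secures (ii).

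The construction proceeds by induction on $a$. Pick $v_1 \in I_1$ arbitrarily. Given $v_1,\dots,v_a$, define $W_a = I_{2a} \setminus E_{\iota(v_a)}(v_a)$. By \cref{claim:exceptional}, $\ab{E_{\iota(v_a)}(v_a)} < 2s$, hence $\ab{W_a} \geq 3s$; moreover, every $w \in W_a$ satisfies $v_a w \in E(G)$ (so $v_a \to w$ in $T$, because $v_a \prec w$) and the edge $v_a w$ is colored $\iota(v_a) \in \Delta_L$. I then wish to choose $v_{a+1} \in I_{2a+1}$ so that many $w \in W_a$ satisfy $v_{a+1} \notin E_{\iota(w)}(w)$; for such $w$, \cref{claim:exceptional} applied to $w \in X_{\iota(w)}$ guarantees $w v_{a+1} \in E(G)$ (so $w \to v_{a+1}$ in $T$) and that this edge is colored $\iota(w) \in \Delta_L$. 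To produce such a $v_{a+1}$, I double-count:
\[
    \sum_{v \in I_{2a+1}} \bab{\{w \in W_a : v \in E_{\iota(w)}(w)\}} = \sum_{w \in W_a} \bab{E_{\iota(w)}(w) \cap I_{2a+1}} < 2s\ab{W_a},
\]
so some $v_{a+1} \in I_{2a+1}$ has at most $2s\ab{W_a}/\ab{I_{2a+1}} \leq 2\ab{W_a}/5$ forbidden vertices of $W_a$. Setting $S_a$ to be the remaining (non-forbidden) vertices of $W_a$ gives $\ab{S_a} \geq 3\ab{W_a}/5 \geq 9s/5 \geq s$, establishing (i); then (iii) and (iv) follow immediately from the guarantees of \cref{claim:exceptional} noted above.

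The main obstacle is that a naive attempt to pick $v_{a+1}$ outside the entire union $\bigcup_{w \in W_a} E_{\iota(w)}(w)$ would fail, since this union could have size as large as $2s\ab{W_a}$, while the pool $I_{2a+1}$ contains only on the order of $5s$ vertices. The double-counting circumvents this by showing that \emph{on average} over $v \in I_{2a+1}$ only a constant fraction of $W_a$ is forbidden, which we can afford thanks to the buffer $\ab{W_a} \geq 3s$ against the target $\ab{S_a} \geq s$. In essence, the entire argument rests on the contrast between the size $\ab{U} \geq ps/2$, which scales with $p$, and the exceptional-set bound $\ab{E_i(v)} < 2s$, which does not; this is precisely why one must take $q$ (and hence $p$) sufficiently large.
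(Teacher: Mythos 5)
Your proof is correct and takes essentially the same approach as the paper: a left-to-right recursive construction based on \cref{claim:exceptional}, removing $E_{\iota(v_a)}(v_a)$ to form a pool for $S_a$, and an averaging (double-counting) argument over the next block to choose $v_{a+1}$ so that most of the pool lies outside the exceptional sets $E_{\iota(w)}(w)$. The only cosmetic difference is bookkeeping: you pre-partition $U$ into $2t+1$ fixed blocks of size at least $5s$, whereas the paper carves out blocks of $4s$ and $8s$ vertices on the fly as it proceeds.
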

    \begin{proof}
        We begin by setting $v_1$ to be the first vertex in $U$ according to $\prec$. We now recursively define $S_a$ and $v_{a+1}$, given the value of $v_a$, such that the desired properties hold. Having already defined $v_1$, we can start the recursion.

        So suppose the value of $v_a$ is given. If there are fewer than $12s$ vertices in $U$ which come after $v_a$ under $\prec$, then we stop the recursion. Otherwise, we let $I_a$ denote the next $4s$ vertices in $U$ after $v_a$, and let $J_a$ be the subsequent $8s$ vertices in $U$ after $I_a$. We finally define $I_a' = I_a \setminus E_{\iota(v_a)}(v_a)$. 
        Note that 
        \[
            4s = \ab{I_a} \geq \ab{I_a'} = \ab{I_a \setminus E_{\iota(v_a)}(v_a)} \geq \ab{I_a} - \ab{E_{\iota(v_a)}(v_a)} \geq 4s - 2s = 2s,
        \]
        that is, that $2s \leq \ab{I_a'} \leq 4s$.

        Next, consider an auxiliary bipartite graph $\Gamma$ between $I_a'$ and $J_a$, where we join $v \in I_a'$ to $w \in J_a$ if and only if $w \in E_{\iota(v)}(v)$. As $\ab{E_{\iota(v)}(v)}\leq 2s$ by \cref{claim:exceptional}, we see that in $\Gamma$, all vertices in $I_a'$ have degree at most $2s$. Thus, the number of edges in $\Gamma$ is at most $2s\ab{I_a'} \leq 8s^2$. As a consequence, there must exist some vertex $w \in J_a$ whose degree in $\Gamma$ is at most $(8s^2)/\ab{J_a}=s$. We let $v_{a+1}$ be such a choice of $w \in J_a$. Finally, we let $S_a\subseteq I_a'$ be the non-neighbors of $v_{a+1}$ in $\Gamma$. By construction, we have
        \[
            \ab{S_a} \geq \ab{I_a'}-s \geq 2s-s=s.
        \]
        We have thus finished defining $S_a$ and $v_{a+1}$, and it remains to verify the claimed properties. We have just shown that $\ab{S_a}\geq s$, as claimed in \ref{it:sizes}. As $v_a \prec I_a \prec J_a$, we certainly have $v_a \prec S_a \prec v_{a+1}$, as claimed in \ref{it:order}. As $S_a \subseteq I_a' \subseteq (A\cup B) \setminus E_{\iota(v_a)}(v_a)$, we conclude from \cref{claim:exceptional} that all edges from $v_a$ to $S_a$ are oriented as $v_a \to S_a$, and they all receive color $\iota(v_a)$, proving the first halves of \ref{it:direction} and \ref{it:color}. For the corresponding claims about edges from $S_a$ to $v_{a+1}$, fix some $v \in S_a$. By the choice of $v_{a+1}$, we know that $v_{a+1} \notin E_{\iota(v)}(v)$, hence we again conclude that the edge $vv_{a+1}$ is oriented as $v \to v_{a+1}$, and that it receives color $\iota(v)\in \Delta_L$, completing the proofs of \ref{it:direction} and \ref{it:color}.

        Note that at every step of this process, we remove at most $12s$ vertices from $U$, hence we can continue this so long as $a<\ab U/(12s)$. Also note that $U$ is the union of the $p$ disjoint sets $U_i$ with $i \in \Delta_L$, and each such set has size at least $s/2$, by the definition of a left-diffuse color. This implies that $\ab U \geq ps/2$, and hence we can continue the process at least to step $t$, where
        \[
            t = \frac{\ab U}{12 s} \geq \frac{ps}{24s} = \frac{p}{24}.\qedhere
        \]
    \end{proof}

    For each $a \in [t]$, let $T_a$ be the subtournament of $T$ induced on the vertex set $S_a$. The key claim to complete the proof is the following, which allows us to glue together $k$-avoiding paths in all of the tournaments $T_a$, for every color $k \notin \Delta_L$. 
    \begin{claim}\label{claim:T glue paths}
        Let $k\in [q] \setminus \Delta_L$ be a color. We have that
        \[
            m_k(T) \geq \sum_{a=1}^t m_k(T_a).
        \]
    \end{claim}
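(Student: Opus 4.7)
The plan is to prove \cref{claim:T glue paths} by gluing together longest $k$-avoiding paths from each $T_a$ into a single long $k$-avoiding directed path in $T$, using the vertices $v_2,\dots,v_t$ supplied by \cref{claim:gluing structure} as connectors. Specifically, for each $a\in[t]$, I would pick a $k$-avoiding directed path $P_a$ in $T_a$ of length $\ell_k(T_a)$, and form the concatenation
\[
    P_1 \to v_2 \to P_2 \to v_3 \to \cdots \to v_t \to P_t,
\]
where the transitions use the edge from the last vertex of $P_a$ (which lies in $S_a$) to $v_{a+1}$, followed by the edge from $v_{a+1}$ to the first vertex of $P_{a+1}$ (which lies in $S_{a+1}$).

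I would then verify that this is indeed a $k$-avoiding directed path. The vertices are all distinct because of the linear order $v_1 \prec S_1 \prec v_2 \prec S_2 \prec \dots \prec S_t \prec v_{t+1}$ from \cref{claim:gluing structure}\ref{it:order}. The edges inside each $P_a$ are directed forward by construction, and the connecting edges $v_a\to S_a$ and $S_a\to v_{a+1}$ are also forward by \cref{claim:gluing structure}\ref{it:direction}. Each $P_a$ avoids color $k$ by choice, and by \cref{claim:gluing structure}\ref{it:color}, every connecting edge is colored by some color in $\Delta_L$; since $k\notin \Delta_L$, the connecting edges also avoid color $k$.

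Counting vertices, this path has length $\sum_{a=1}^t \ell_k(T_a) + (t-1) \geq \sum_{a=1}^t \ell_k(T_a)$, and hence $\ell_k(T) \geq \sum_{a=1}^t \ell_k(T_a)$. To upgrade this to the bound in terms of the clipped quantities $m_k$, I would split into two cases. If $\ell_k(T) < \gamma N$, then $m_k(T)=\ell_k(T) \geq \sum_a \ell_k(T_a) \geq \sum_a m_k(T_a)$, as desired. Otherwise, $m_k(T)=\gamma N$, and since the $S_a$ are pairwise disjoint subsets of $V(T)$ we have $\sum_a |S_a| \leq N$, so
\[
    \sum_{a=1}^t m_k(T_a) \leq \sum_{a=1}^t \gamma |S_a| \leq \gamma N = m_k(T).
\]

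I do not anticipate any real obstacle in this argument, since \cref{claim:gluing structure} has already been arranged to produce precisely the gluing structure needed: the glue edges are forward-directed and colored outside $\Delta_L$'s complement. The only subtlety is that $m_k$ is the clipped version of $\ell_k$, but this is handled cleanly by the two-case split above, using disjointness of the $S_a$ in the clipped case.
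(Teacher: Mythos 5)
Your proposal is correct and matches the paper's argument essentially step for step: concatenate the longest $k$-avoiding paths $P_a\subseteq T_a$ using the connector vertices $v_a$, invoke \cref{claim:gluing structure} for vertex-disjointness, forward orientation, and $\Delta_L$-coloring of the glue edges, and then split into the cases $\ell_k(T)<\gamma N$ and $\ell_k(T)\geq\gamma N$ using disjointness of the $S_a$ for the latter. The only (harmless) difference is that the paper also prepends $v_1$ and appends $v_{t+1}$ to the concatenated path, but this is inessential.
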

    \begin{proof}
        Let $P_a$ be a longest $k$-avoiding directed path in $T_a$, for all $a \in [t]$. We concatenate these paths into a longer path $P$ defined as $v_1P_1v_2P_2 \dots P_t v_{t+1}$. That is, we start at $v_1$, go to the first vertex of the directed path $P_1$, traverse $P_1$, go to $v_2$, and then continue in this fashion. By \cref{claim:gluing structure}, this gives us a directed path, since $v_a \to S_a \to v_{a+1}$ for all $a \in [t]$. Moreover, all edges $v_a \to S_a \to v_{a+1}$ are colored by colors in $\Delta_L$, by \cref{claim:gluing structure}, and in particular are not colored by the color $k$. Thus, the path $P$ avoids the color $k$, and its length is at least $\sum_{a=1}^t \ell_k(T_a)$; this proves that
        \[
            \ell_k(T) \geq \sum_{a=1}^t \ell_k(T_a).
        \]
        We now split into cases depending on whether $k$ is long or short. If $k$ is short, then $m_k(T)=\ell_k(T)$, hence
        \[
            m_k(T)=\ell_k(T) \geq \sum_{a=1}^t \ell_k(T_a) \geq \sum_{a=1}^t m_k(T_a).
        \]
        On the other hand, if $k$ is long, then
        \[
            m_k(T) = \gamma N \geq \sum_{a=1}^t \gamma \ab{T_a} \geq \sum_{a=1}^t m_k(T_a),
        \]
        where the first inequality uses that the sets $S_1,\dots,S_t$ are pairwise disjoint subsets of $[N]$. 
        In either case, we have the claimed bound.
    \end{proof}
    From \cref{claim:T glue paths} and the inequality of arithmetic and geometric means, we find that
    \[
        m_k(T) \geq \sum_{a=1}^t m_k(T_a) \geq t \left( \prod_{a=1}^t m_k(T_a) \right)^{1/t}
    \]
    for every color $k \in [q] \setminus \Delta_L$. On the other hand, for every color $k \in \Delta_L$, we trivially have
    \[
        m_k(T) \geq \max\{m_k(T_1),\dots,m_k(T_t)\} \geq \left( \prod_{a=1}^t m_k(T_a) \right)^{1/t}.
    \]
    Combining these bounds, we find that
    \begin{align*}
        \Pi(T) &= \prod_{k=1}^q m_k(T) =\prod_{k\in \Delta_L} m_k(T) \cdot \prod_{k\in [q] \setminus \Delta_L} m_k(T)\\
        &\geq \prod_{k\in \Delta_L} \left( \prod_{a=1}^t m_k(T_a) \right)^{1/t} \cdot \prod_{k\in [q]\setminus \Delta_L} t \left( \prod_{a=1}^t m_k(T_a) \right)^{1/t}\\
        &= t^{q-p} \left( \prod_{a=1}^t \prod_{k=1}^q m_k(T_a) \right)^{1/t}\\
        &=t^{q-p} \left( \prod_{a=1}^t \Pi(T_a) \right)^{1/t}\\
        &\geq t^{q-p}\cdot \Pi_q(s),
    \end{align*}
    where the final step uses that each $T_a$ is a $q$-edge-colored tournament on $\ab{S_a}\geq s$ vertices.

    By the induction hypothesis on $s$ vertices, we know that $\Pi_q(s) \geq c_q s^{q-Cq/\sqrt{\log q}}$. From the definition of $s$ in \eqref{eq:T parameters}, we have that
    \[
        \frac{s^{q-Cq/\sqrt{\log q}}}{N^{q-Cq/\sqrt{\log q}}} = \left( \frac{1}{q\cdot 2^{\sqrt{\log q}}} \right)^{q-Cq/\sqrt{\log q}} = q^{-q+Cq/\sqrt{\log q}} \cdot 2^{-q\sqrt{\log q}+Cq} \geq q^{-q} \cdot 2^{(C-1)q\sqrt{\log q}}.
    \]
    Similarly, since $t=p/24$ from \cref{claim:gluing structure}, and from the definition of $p$ from \eqref{eq:T parameters}, we have that
    \[
        t^{q-p} = \left( \frac{q}{2^{\sqrt{\log q}}} \right)^{q-p} =q^{q-p} \cdot 2^{-q \sqrt{\log q}+p\sqrt{\log q}} \geq q^{q-p} \cdot 2^{-q\sqrt{\log q}}.
    \]
    Since $q$ is sufficiently large, we have that $2^{\sqrt{\log q}}\geq 24\sqrt{\log q}$, and hence $p \leq q/\sqrt{\log q}$. Therefore,
    \[
        t^{q-p}\geq q^{q-p} \cdot 2^{-q\sqrt{\log q}} \geq q^{q-q/\sqrt{\log q}} \cdot 2^{-q\sqrt{\log q}}=q^q \cdot 2^{-2q\sqrt{\log q}}. 
    \]
    Combining these bounds, we find that
    \begin{align*}
        \frac{\Pi(T)}{c_q N^{q-Cq/\sqrt{\log q}}} &\geq \frac{t^{q-p}\cdot \Pi_q(s)}{c_q N^{q-Cq/\sqrt{\log q}}} \geq t^{q-p}\cdot \frac{s^{q-Cq/\sqrt{\log q}}}{N^{q-Cq/\sqrt{\log q}}}\\
        &\geq \left( q^q \cdot 2^{-2q \sqrt{\log q}} \right) \left( q^{-q} \cdot 2^{(C-1)q\sqrt{\log q}} \right)\\
        &= 2^{(C-3)q\sqrt{\log q}}. 
    \end{align*}
    In particular, as $C$ is sufficiently large, then this exponent is at least $2\delta q$, hence we find that $\Pi(T) \geq 2^{\delta q}\cdot c_q\cdot N^{q-Cq/\sqrt{\log q}}$, as we wanted to prove in \eqref{eq:goal}. This completes the proof of Case 2, and thus of \cref{prop:T main product}.
\end{proof}

\section{Concluding remarks}\label{sec:conclusion}
There are a number of natural questions left open by our work. First, recall that for $q \geq 4$, \cref{prop:F=G} implies that $F_{q,q-1}(n)=G_{q,q-1}(n)$. In other words, in the vector question, the added flexibility of choosing a $(q-1)$-comparable set (where the tournament of comparability may be arbitrary) does not add anything; the largest such set is actually a $(q-1)$-increasing sequence, i.e.\ whose tournament is transitive. In the setting of color-avoiding paths, \cref{labelfarfromtransitive} gives an approximate version of an analogous statement: a tournament all of whose color-avoiding paths are short must be $o(1)$-close to transitive. However, we do not know how to prove that the extremal construction is precisely transitive, and, in fact, do not expect this to be true.
\begin{problem}\label{conj:transitive not extremal}
    For some integers $q \geq 4$ and $N$, does there exist a non-transitive $q$-edge-colored $N$-vertex tournament whose longest color-avoiding path has strictly fewer than $f_{q,q-1}(N)$ vertices?
\end{problem}
Note that the answer to this problem is affirmative for $q=3$, since already the old construction of Hamaker and Stein \cite{MR754867} gives a $2$-comparable set of vectors in $[7]^3$ which has strictly more vectors than the longest $2$-increasing sequence in $[7]^3$, yielding such a non-transitive tournament. However, \cref{prop:F=G} shows that no such vector construction can exist for $q \geq 4$, hence we do not know how to resolve \cref{conj:transitive not extremal}.

Another natural question left open by our work is to extend our results to the more general setting of  directed paths in $q$-edge-colored tournaments which receive at most $r$ colors, for more general parameters $1 \leq r < q$. For example, \cref{labelfarfromtransitive} allows us to find long paths that receive only $3$ colors whenever we are working with a tournament that is far from transitive; however, many of our other arguments seem specialized to the color-avoiding setting of $r=q-1$, and there appear to be difficulties in extending them to the most general setting. On the other hand, the simple reductions recorded in \cref{lem:merge colors} do allow us to obtain some non-trivial results in this more general setting as a consequence of \cref{thm:main}. For example, we can prove the following result, which gives a good estimate when $q/r$ is close to $1$.
\begin{proposition}\label{prop:generalr}
    For every $\varepsilon>0$, there exists $\delta>0$ such that the following holds. If $q >r$ are integers with $r\geq (1-\delta)q$, then $g_{q,r}(N) \geq c_{q,r} N^{1-\varepsilon}$ and $f_{q,r}(N) \geq N^{1-\varepsilon}$ for all $N$, where $c_{q,r}>0$ is a constant depending only on $q$ and $r$.
\end{proposition}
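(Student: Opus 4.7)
The plan is to combine the trivial reduction in \cref{lem:merge colors}\ref{it:take floor} with our main results, \cref{thm:main} and \cref{thm:intro monotone}. The key observation is that the condition $r \geq (1-\delta)q$ with $\delta$ small forces $p := \flo{q/(q-r)}$ to be large, so that \cref{lem:merge colors}\ref{it:take floor} reduces the $(q,r)$-problem to the $(p,p-1)$ problem we have already solved. Since the lower bounds on $f_{p,p-1}(N)$ and $g_{p,p-1}(N)$ given by \cref{thm:main} and \cref{thm:intro monotone} have exponent tending to $1$ as $p\to\infty$, they beat any prescribed $N^{1-\varepsilon}$ once $p$ is large enough.

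Concretely, given $\varepsilon > 0$, the first step is to select $p_0 \geq 4$ large enough that $C/\sqrt{\log p_0} \leq \varepsilon$, where $C$ is the absolute constant hidden inside the $O(1/\sqrt{\log q})$ term in \cref{thm:main} and \cref{thm:intro monotone}. Then I set $\delta := 1/p_0$. For any integers $q > r$ with $r \geq (1-\delta)q$, we have $q-r \leq q/p_0$, so $q/(q-r) \geq p_0$, and hence $p := \flo{q/(q-r)} \geq p_0 \geq 4$.

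With this choice of $p$, \cref{lem:merge colors}\ref{it:take floor} yields $f_{q,r}(N) \geq f_{p,p-1}(N)$ and $g_{q,r}(N) \geq g_{p,p-1}(N)$. Since $\log p \geq \log p_0$, \cref{thm:intro monotone} gives $f_{p,p-1}(N) \geq N^{1 - C/\sqrt{\log p}} \geq N^{1-\varepsilon}$, and \cref{thm:main} gives $g_{p,p-1}(N) \geq c_p N^{1 - C/\sqrt{\log p}} \geq c_p N^{1-\varepsilon}$. Setting $c_{q,r} := c_p$, which depends only on $q$ and $r$ through the integer $p = p(q,r)$, then delivers both claimed bounds.

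There is no real obstacle here: the proof is just a matter of matching parameters and invoking the already-proved tools. The one genuinely different feature is that for $g_{q,r}$ the constant $c_{q,r}$ cannot be eliminated, because the powering argument used in \cref{prop:c=1} to remove the constant from the $f$-bound goes through \cref{prop:lex product}, which is stated for ordered complete graphs and does not carry over to arbitrary tournaments.
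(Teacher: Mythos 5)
Your proposal is correct and takes essentially the same route as the paper: choose $p_0$ large enough, set $\delta = 1/p_0$, and invoke \cref{lem:merge colors}\ref{it:take floor} to reduce to the $(p,p-1)$ case with $p \geq p_0$. The only (cosmetic) difference is that you obtain the clean $f$-bound directly from \cref{thm:intro monotone}, whereas the paper first proves $f_{q,r}(N) \geq c_{q,r}N^{1-\varepsilon}$ via $f_{q,r} \geq g_{q,r}$ and then strips the constant with \cref{prop:c=1}; both organizations are fine since \cref{thm:intro monotone} is itself deduced from \cref{thm:main} via \cref{prop:c=1}.
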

\begin{proof}
    By \cref{thm:main}, there exists some $p_0=p_0(\varepsilon)\geq 2$ such that for all $p \geq p_0$, we have $g_{p,p-1}(N) \geq c_p N^{1-\varepsilon}$. Let $\delta = 1/p_0$, and fix integers $q>r$ with $r\geq (1-\delta)q$. Letting $p = \flo{\frac{q}{q-r}}$, we see that $p \geq \flo{\frac q{\delta q}} = p_0$. Therefore, \cref{lem:merge colors}\ref{it:take floor} implies $g_{q,r}(N) \geq g_{p,p-1}(N)\geq c_p N^{1-\varepsilon}$, which is the claimed result upon setting $c_{q,r}=c_p$ (noting that $p$ depends only on $q$ and $r$). Since $f_{q,r}(N)\geq g_{q,r}(N)$, we find that $f_{q,r}(N) \geq c_{q,r}N^{1-\varepsilon}$, which implies $f_{q,r}(N) \geq N^{1-\varepsilon}$ by \cref{prop:c=1}.
\end{proof}
We stress that these questions are also interesting in the transitive case. In particular, while \cref{prop:sawin-tao} provides good bounds on $F_{q,r}(n)$, once $r \neq q-1$, this problem is no longer equivalent to the study of $f_{q,r}(N)$, but rather to a set-colored Ramsey question. Thus, determining the behavior of $f_{q,r}(N)$ remains an interesting open problem.

\subsection*{Acknowledgments} We would like to thank Cosmin Pohoata for bringing the work of Sawin and Tao to our attention, and for helpful discussions on this topic.

\end{document}